\documentclass[10pt,a4paper]{amsart}
\usepackage{amscd,amssymb,amsmath,latexsym}

\usepackage{amsfonts}
\usepackage{graphicx}%
\usepackage[active]{srcltx}
\setcounter{MaxMatrixCols}{30}
\newtheorem{definition}{Definition}[section]
\newtheorem{notation}[definition]{Notation}
\newtheorem{rema}[definition]{Remark}
\newtheorem{exa}[definition]{Example}
\newtheorem{obs}[definition]{Observation}

\newtheorem{lemma}[definition]{Lemma}
\newtheorem{proposition}[definition]{Proposition}
\newtheorem{theorem}[definition]{Theorem}
\newtheorem{corollary}[definition]{Corollary}

\newenvironment{remark}%
{\begin{rema}\rm}%
{\end{rema} }

{\begin{exa}\rm}%
{\end{exa} }

\newenvironment{observation}%
{\begin{obs}\rm}%
{\end{obs} }

\newenvironment{ac}{\noindent{\bf Acknowledgements:}}{}

\newcommand{\Sres}{{\operatorname{Sres}}}
\newcommand{\Res}{{\operatorname{Res}}}
\newcommand{\Syl}{{\operatorname{Syl}}}
\newcommand{\mSyl}{{\operatorname{MSyl}}}

\newcommand{\coeff}{{\operatorname{coeff}}}

\def\sg{\mbox{sg}}
\newcommand{\bfa}{{\boldsymbol{a}}}

\newcommand{\bfx}{X}
\newcommand{\bfy}{Y}

\def\RR{{\mathcal R}}

\def\C{{\mathbb C}}

\def\Q{{\mathbb Q}}

\def\Z{{\mathbb Z}}

\def\sg{\mbox{sg}}
\def\Res{{\rm{Res}}}

\begin{document}

\title[Symmetric interpolation  and  Sylvester sums]{Symmetric interpolation, Exchange Lemma \\and  Sylvester sums}

\author{Teresa Krick}
\address{Departamento de Matem\'atica, Facultad de
Ciencias Exactas y Naturales  and IMAS,
CONICET, Universidad de Buenos Aires,  Argentina} \email{krick@dm.uba.ar}
\urladdr{http://mate.dm.uba.ar/\~\,krick}

\author{Agnes Szanto}
\address{Department of Mathematics, North Carolina State
University, Raleigh, NC 27695 USA}
\email{aszanto@ncsu.edu}
\urladdr{www4.ncsu.edu/\~\,aszanto }

\author{Marcelo Valdettaro}
\address{Departamento de Matem\'atica, Facultad de
Ciencias Exactas y Naturales, Universidad de Buenos Aires, Argentina}
\email{mvaldettaro@gmail.com}

\begin{abstract}
The theory of symmetric multivariate Lagrange interpolation is a beautiful but rather unknown tool that has many applications. Here we derive from it an Exchange Lemma that allows to explain in a simple and natural way the full  description of  the  double sum expressions introduced by Sylvester in 1853 in terms of subresultants and their B\'ezout coefficients.
\\ Keywords: Sylvester dou\\
2010 MSC: 13P15, 65D05
\end{abstract}

\date{\today}
\thanks{
Teresa Krick is partially
suported by ANPCyT PICT-2013-0294  and UBACyT
2014-2017-20020130100143BA, and Agnes Szanto was partially supported by NSF grant
CCF-1217557.}\maketitle

\maketitle

\section{Introduction}

 W. Chen and J. Louck proved in \cite[Th.2.1]{CL96} a beautiful interpolation result which describes the Lagrange interpolation basis for all multivariate symmetric polynomials in $m-d$ variables of multidegree bounded by $(d,\dots,d)$, for $0\le d<m$, see Section~\ref{section1} below for a precise statement. They use their result to recover identities involving symmetric functions, generalizing for instance the following  polynomial identity for a finite set $A=\{\alpha_1,\dots,\alpha_m\}$ contained in a field, and a finite
set of variables $X=\{x_1\dots, x_{m-d}\}$:

$$x_1\cdots x_{m-d} = \sum_{{A'\subset A, |A'|=d}} \Big(\prod_{\alpha_j \notin A'} \alpha_j\Big)\  \dfrac{ \prod_{x_j\in X, \alpha_i\in A'}(x_j-\alpha_i)}{ \prod_{\alpha_j\notin A', \alpha_i\in A'}(\alpha_j-\alpha_i)}.$$

Here we take another direction and derive from this symmetric interpolation the following Exchange Lemma (see Corollary~\ref{exchange2} below): given $0\le d\le m$ and $0\le r\le m-d$, for any finite sets $A$ and $B$ contained in a field,  satisfying $|A|=m$ and $|B|\ge d$, and any set of variables $X$ with $|X|=r$, one has the following polynomial identity
\[\sum_{\substack{A^{\prime}\subset A\\|A^{\prime}|=d}}\mathcal{R}(A\backslash A^{\prime},B)\frac{\mathcal{R}(\bfx,A^{\prime})}{\mathcal{R}(A^{\prime},A\backslash A^{\prime})}= (-1)^{d(m-d)} \sum_{\substack{B^{\prime}\subset B\\|B^{\prime}|=d}}\mathcal{R}(A,B\backslash B^{\prime})\frac{\mathcal{R}(\bfx,B^{\prime})}{\mathcal{R}(B^{\prime},B\backslash B^{\prime})} ,\]
where $ \mathcal{R}(Y,Z):=\prod_{y\in Y,z\in
Z}(y-z)
$ (and $ \mathcal{R}(Y,Z):=1
$ if $Y$ or $Z$ is empty).

In the particular case that  $|B|=d$, our Exchange Lemma   reads \[\sum_{{A^{\prime}\subset A,|A^{\prime}|=d}}\mathcal{R}(A\backslash A^{\prime},B)\frac{\mathcal{R}(\bfx,A^{\prime})}{\mathcal{R}(A\backslash A^{\prime},A^{\prime})}= \mathcal{R}(\bfx,B), \]
which is exactly the statement of \cite[Lem.Rt1]{Las}, proved there using Schur functions. However, it does not seem possible to directly recover our Exchange Lemma from Lascoux result.

In these pages, we use symmetric interpolation and the Exchange Lemma to show in a very natural way the different and somehow puzzling relationships between Sylvester single and double sums, subresultants and their B\'ezout coefficients.

{\em Double sums} were introduced in  \cite{sylv}: for $A=\{\alpha_1,\dots,\alpha_m\}$ and $B=\{\beta_1,\dots,\beta_n\}$ contained in a field, he defined for $0\le p\le m$, $0\le q\le n$

\[
 \Syl_{p,q}(A,B)(x):=\sum_{\substack{A^{\prime
}\subset A,\,B^{\prime}\subset
B\\|A^{\prime}|=p,\,|B^{\prime}|=q}}\mathcal{R}(A^{\prime},B^{\prime})\,
\mathcal{R}(A\backslash A^{\prime},B\backslash
B^{\prime})\,\frac{\mathcal{R}(x,A^{\prime
})\,\mathcal{R}(x,B^{\prime})}{\mathcal{R}(A^{\prime},A\backslash A^{\prime
})\,\mathcal{R}(B^{\prime},B\backslash B^{\prime})},\]
which  is a polynomial in $x$ of degree bounded by $p+q=:d$. When $p=0$ or $q=0$, the resulting expression is called a {\em single sum}.

Note that the previous Exchange Lemma applied to $X=\{x\}$ reads
\begin{equation}\label{relsingle} \Syl_{d,0}(A,B)(x)=(-1)^{d(m-d)} \Syl_{0,d}(A,B)(x),\end{equation}
which is one of the relationships between single sums that we can derive from  Sylvester's statements in his original work.

Subresultants were also introduced by Sylvester in the same article. For $f(x) = \sum_{i=0}^m f_ix^i$ with $f_m\ne 0$,   $g(x)= \sum_{i=0}^n g_ix^i$ with $g_n\ne 0$,  and $d\le \min\{m,n\}$ when $m\ne n$ or $d<m=n$,
 $$\Sres_d(f,g)(x):= \det \begin{array}{|cccccc|c}
\multicolumn{6}{c}{\scriptstyle{m+n-2d}}&\\
\cline{1-6}
f_{m} & \cdots & &\cdots &f_{d+1-(n-d-1)} &x^{n-d-1}f(x)& \\
&  \ddots & &&\vdots  & \vdots& \scriptstyle{n-d}\\
& & f_m& \dots &f_{d+1}&f(x)& \\
\cline{1-6}
g_{n} &\cdots & &\cdots  &g_{d+1-(m-d-1)}  &x^{m-d-1}g(x)&\\
&\ddots && &\vdots  &\vdots  &\scriptstyle{m-d}\\
&&g_{n} & \cdots &  g_{d+1} &g(x)&\\
\cline{1-6} \multicolumn{2}{c}{}
\end{array}.$$
This turns out to be  a polynomial in $x$ of degree bounded by $d$.

Associating to $A$ and $B$ the monic polynomials $f(x):=(x-\alpha_1)\cdots (x-\alpha_m)$ and $g(x):=(x-\beta_1)\cdots(x-\beta_n)$, one has
for instance
\begin{align*} &\Syl_{0,0}(A,B)  = \RR(A,B) =\Res(f,g),\\
& \Syl_{m,0}(A,B) = \RR(x,A)=f \ (= \Sres_m(f,g)    \mbox{ if } m<n),\\
& \Syl_{0,n}(A,B) = \RR(x,B)=g  \ (= \Sres_{n}(f,g)    \mbox{ if } n<m),\\
 & \Syl_{m,n}(A,B) = \RR(A,B)\RR(x,A)\RR(x,B)=\Res(f,g) \,f \,g ,\end{align*}
where $\Res(f,g):=\Sres_0(f,g) $ is the resultant of $f$ and $g$, which is well-known to satisfy the Poisson formula $\Res(f,g)= \prod_{\alpha\in A}g(\alpha)=\RR(A,B)$.

Sylvester also mentions in his article \cite{sylv} the link between double sums and subresultants, and many other expressions for $\Syl_{p,q}(A,B) $ depending on the values of $p$ and $q$, see also \cite{LP03,DHKS07,RS11}. The full description of $\Syl_{p,q}(A,B) $ for all possible cases of $0\le p\le m$, $0\le q\le n$ is given as follows:

\begin{theorem}\label{fulldescription} (See \cite[Main Th. 1]{DHKS09} and also \cite[Th. 1]{KS12}.)\\
Let $1\le m\le n,
\;\; 0\le p\le m, \;\;0\le q\le n,\;$  and set $d:=p+q$ and $k :=
m+n-d-1$. Then
 {
 \small
$$\operatorname{Sylv}_{p,q}(A,B)=\left\{
\begin{array}{lll}
(-1)^{p(m-d)}{d\choose p}
\Sres_{d}(f,g)&\mbox{ \ for \ }& 0\le d<  m \;\mbox{ or } \;m=d<n\\[1mm] 0&\mbox{ \ for \ }& m<d<n-1\\[1mm]
(-1)^{(p+1)(m+n-1)}{m\choose p} f&\mbox{ \ for \ }& m<d=n-1\\[1mm]
(-1)^{\sigma} \Big( {k\choose m-p}F_k(f,g) \ f- {k\choose
n-q}G_k(f,g) \ g\Big)& \\
= (-1)^{\sigma+1} \Big(  {k\choose
n-q}\Sres_k(f,g)- {k+1\choose m-p}F_k(f,g) \ f\Big) &\mbox{ \ for \ }&n\le d\le  m+n-1\\[1mm]
\Res(f,g)\ f\, g&\mbox{ \ for \ } &d=m+n.
\end{array}
\right. $$} where $\sigma  := (d-m)(n-q)+d-n-1$ and
$F_k(f,g) $ and $G_k(f,g)$ are the polynomial coefficients of $f $ and $g $ in the Bezout identity
\begin{equation}\label{Bezout}\Sres_k(f,g) =F_k(f,g) \,f + G_k(f,g)\,g,\end{equation} given by the determinantal expressions
{\small
\begin{align*}
 F_k(f,g)(x) &
:=  \det \begin{array}{|cccccc|c}
\multicolumn{6}{c}{\scriptstyle{m+n-2k}}&\\
\cline{1-6}
f_{m} & \cdots & &\cdots &f_{k+1-(n-k-1)} &x^{n-k-1}& \\
&  \ddots & &&\vdots  & \vdots& \scriptstyle{n-k}\\
& & f_m& \dots &f_{k+1}&1& \\
\cline{1-6}
g_{n} &\cdots & &\cdots  &g_{k+1-(m-k-1)}  &0&\\
&\ddots && &\vdots  &\vdots  &\scriptstyle{m-k}\\
&&g_{n} & \cdots &  g_{k+1} &0&\\
\cline{1-6} \multicolumn{2}{c}{}
\end{array}\\
G_k(f,g)(x) &
 := \det%
\begin{array}{|cccccc|c}
\multicolumn{6}{c}{\scriptstyle{m+n-2k}}&\\
\cline{1-6}
f_{m} & \cdots & &\cdots &f_{k+1-(n-k-1)} &0& \\
&  \ddots & &&\vdots  & \vdots& \scriptstyle{n-k}\\
& & f_m& \dots &f_{k+1}&0& \\
\cline{1-6}
g_{n} &\cdots & &\cdots  &g_{k+1-(m-k-1)}  &x^{m-k-1}&\\
&\ddots && &\vdots  &\vdots  &\scriptstyle{m-k}\\
&&g_{n} & \cdots &  g_{k+1} &1&\\
\cline{1-6} \multicolumn{2}{c}{}
\end{array}.
\end{align*}}

\end{theorem}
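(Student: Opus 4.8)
The plan is to treat $d:=p+q$ as the governing parameter and to combine two tools: the Exchange Lemma (Corollary~\ref{exchange2}), which for $d\le m$ collapses the double sum into a scalar multiple of a single sum, and the Chen--Louck symmetric interpolation of Section~\ref{section1}, which identifies the surviving single sums with $\Sres_d$, $f$ and $g$. Since each asserted identity is one of polynomials in the $\alpha_i$, $\beta_j$ and $x$ over $\Z$, I would first reduce to the case where all $\alpha_i,\beta_j$ are pairwise distinct, and I would keep $m\le n$ throughout by invoking the symmetry $\operatorname{Sylv}_{p,q}(A,B)=(-1)^{pq+(m-p)(n-q)}\operatorname{Sylv}_{q,p}(B,A)$ (immediate from $\RR(A',B')=(-1)^{pq}\RR(B',A')$, etc.).

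\emph{The Exchange Lemma collapse.} Assume $d\le m-1$. In $\operatorname{Sylv}_{p,q}(A,B)(x)$ I would freeze a subset $B'\subseteq B$ with $|B'|=q$ and collect the factors of each summand involving $A'$; using the elementary identity $\RR(A',B')\,\RR(x,A')=(-1)^{pq}\,\RR(\{x\}\cup B',A')$ one recognises the inner sum over $A'$ — up to the sign $(-1)^{pq}$ and the $B'$-factor $\RR(x,B')/\RR(B',B\setminus B')$ — as the left-hand side of Corollary~\ref{exchange2} with ``$B$'' taken to be $B\setminus B'$, with ``$d$'' equal to $p$, and with variable-set $X=\{x\}\cup B'$; the admissibility condition $|X|=1+q\le m-p$ is precisely $d\le m-1$. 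Applying the Exchange Lemma rewrites the inner sum as a sum over $B''\subseteq B\setminus B'$ with $|B''|=p$; setting $\wt B:=B'\sqcup B''$ (so $|\wt B|=d$) and simplifying products of $\RR$'s, the summand turns out to depend only on $\wt B$, and since each $\wt B$ arises from exactly $\binom dp$ ordered splittings one obtains
\[\operatorname{Sylv}_{p,q}(A,B)=(-1)^{p(m-d)}\binom dp\,\operatorname{Sylv}_{0,d}(A,B)\qquad(0\le d\le m-1),\]
the sign $(-1)^{pq+p(m-p)}$ produced by the bookkeeping being $(-1)^{p(m-d)}$; taking $q=0$ recovers \eqref{relsingle}, and the boundary case $d=m<n$ gives the same formula by the same manipulation (or by a degree count).

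\emph{The interpolation identification.} For $d\le m$ it then remains to evaluate $\operatorname{Sylv}_{0,d}(A,B)(x)=\sum_{|B'|=d}\RR(A,B\setminus B')\,\RR(x,B')/\RR(B',B\setminus B')$, a Lagrange-type sum; expanding it in the Chen--Louck interpolation basis attached to $B$ one reads off the interpolated polynomial and obtains $\operatorname{Sylv}_{0,d}(A,B)=\Sres_d(f,g)$ for $d<m$ and $\Sres_m(f,g)=f$ for $d=m<n$ — in effect the Poisson-type formula for subresultants. Together with the previous step this proves the first line of the statement.

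\emph{The range $d>m$.} Here the double sum does not collapse, and I would pin down $\operatorname{Sylv}_{p,q}(A,B)$, a polynomial of degree $\le d\le m+n-1$, by its values at the $m+n$ points $\alpha_i,\beta_j$. Setting $x=\alpha_i$ kills every term with $\alpha_i\in A'$, and after a cancellation one gets $\operatorname{Sylv}_{p,q}(A,B)(\alpha_i)=(-1)^p g(\alpha_i)\,[x^d]\,\operatorname{Sylv}_{p,q}(A\setminus\{\alpha_i\},B)$, and symmetrically $\operatorname{Sylv}_{p,q}(A,B)(\beta_j)=(-1)^q f(\beta_j)\,[x^d]\,\operatorname{Sylv}_{p,q}(A,B\setminus\{\beta_j\})$, where $[x^d]$ denotes the coefficient of $x^d$ in a polynomial of degree $\le d$ (interpreting $\operatorname{Sylv}_{m,q}$ or $\operatorname{Sylv}_{p,n}$ as vanishing at the relevant point when $p=m$ or $q=n$). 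This sets up a strong induction on $m+n$, the base $m=n=1$ being immediate: in the inductive step the first line is given by the two previous steps, and for the remaining $(p,q)$ the two root-evaluations reduce everything to instances with smaller $m+n$, while $d=m+n$ is the one-term identity $\operatorname{Sylv}_{m,n}(A,B)=\RR(A,B)fg=\Res(f,g)fg$; a polynomial of degree $\le m+n-1$ is then determined by its $m+n$ known values. Using $\Sres_k=F_kf+G_kg$ from \eqref{Bezout} — so $G_k(\alpha_i)=\Sres_k(\alpha_i)/g(\alpha_i)$ and $F_k(\beta_j)=\Sres_k(\beta_j)/f(\beta_j)$ — together with $\Sres_k=\operatorname{Sylv}_{0,k}(A,B)$ for $k=m+n-d-1\le m-1$ from the earlier steps, one checks that the claimed right-hand side takes exactly these values at the $\alpha_i$ and $\beta_j$, settling the lines $m<d<n-1$, $d=n-1$ and $n\le d\le m+n-1$. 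The hard part will be precisely this last bookkeeping — propagating the sign $(-1)^\sigma$ through the two root-reductions and checking that the weights $\binom{k}{m-p}$, $\binom{k}{n-q}$ and the factor $\binom dp$ (now with $k$ in place of $d$) line up at both families of interpolation points — which is where all the case distinctions in the statement originate.
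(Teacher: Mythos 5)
Your first two steps match the paper's development closely: the collapse
$\Syl_{p,q}(A,B)=(-1)^{p(m-d)}\binom{d}{p}\Syl_{0,d}(A,B)$ for $d\le m-1$ is exactly
Proposition~\ref{dsmall1} (obtained, as you say, by recognising the inner $A'$-sum as a Corollary~\ref{exchange2} instance with $X=\{x\}\cup B'$ and the complement $B\setminus B'$ playing the role of $B$), and the identification $\Syl_{0,d}=\Sres_d$ for $d\le\min\{m,n-1\}$ is Proposition~\ref{ss=sub}, proved via the Chen--Louck basis and the determinantal formula of Proposition~\ref{matrixforms}. So up to and including the first line of the theorem you are on the paper's track. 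Your root-recursion $\Syl_{p,q}(A,B)(\alpha_i)=(-1)^p\,g(\alpha_i)\,[x^d]\Syl_{p,q}(A\setminus\{\alpha_i\},B)$ is also correct (when $p<m$; it vanishes when $p=m$), as a short computation with the factorisations $\RR(A\setminus A',B\setminus B')=\RR(\alpha_i,B\setminus B')\,\RR(\tilde A\setminus A',B\setminus B')$ and $\RR(\alpha_i,B')\,\RR(\alpha_i,B\setminus B')=g(\alpha_i)$ confirms.

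Where you genuinely depart from the paper is the whole range $d>m$. You propose to pin $\Syl_{p,q}$ down by its $m+n$ values at the roots and run an induction on $m+n$, then verify that the asserted right-hand sides take the same values using $\Sres_k=F_kf+G_kg$ and $\Syl_{0,k}=\Sres_k$. That is essentially the inductive verification of \cite{KS12}, which the authors explicitly mention as the approach they want to move past: it requires knowing the target formulas in advance and grinding through sign and binomial bookkeeping at both families of nodes (and you yourself flag that this ``hard part'' is left undone). The paper instead observes that the Exchange Lemma applies a second time, in the other direction: for $\max\{m,n\}\le d\le m+n-1$ one groups $x$ with $B'$ and uses Corollary~\ref{exchange2} with the roles of $A$ and $B$ swapped (now $|X|=|B\setminus B'|=n-q\le m-(m-p)$ since $d\ge n$), and after splitting according to whether $x$ lands in the new index set one gets the clean decomposition
\[
\Syl_{p,q}(A,B)=(-1)^c\binom{k}{n-q}\Syl_{0,k}(A,B)+(-1)^e\binom{k+1}{m-p}\Syl_{m,d-m}(A,B)
\]
(Proposition~\ref{dbig}), with $k:=m+n-d-1$. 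This reduces everything to just two extremal sums; only the single case $\Syl_{m,d-m}=(-1)^{\cdot}F_k\,f$ (Proposition~\ref{mq}) is then proved by evaluation at roots, and the intermediate range $m<d\le n-1$ is handled directly by interpolation (Lemma~\ref{deggB<}, Remark~\ref{ha'}, Corollary~\ref{dn-1}) rather than by recursion. So your plan is plausible in outline but regresses to the verification-by-interpolation method for the hardest cases, and it omits the second, ``reversed'' application of the Exchange Lemma that is precisely the paper's new idea; to make your route into a proof you would still have to do the full sign/binomial computation you have deferred.
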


We note that Theorem \ref{fulldescription}, even if stated for $m\le n$,  indeed gives  the full description of $\Syl_{p,q}(A,B)$ in terms of $\Sres_{d}(f,g)$ and $F_k(f,g)$, $G_k(f,g)$ for any value of $m$ and $n$ because of the symmetries
\begin{align}\label{symmetry}
&\Syl_{p,q}(A,B)=(-1)^{pq+(m-p)(n-q)}\Syl_{q,p}(B,A) \\
&\Sres_d(f,g)=(-1)^{(m-d)(n-d)}\Sres_d(g,f). \nonumber
\end{align}

This theorem implies in particular Identity~\eqref{relsingle}. In fact many authors worked out the relationship between    single sums $\Syl_{d,0}(A,B)$ and   subresultants $\Sres_d(f,g)$ in the case $d\le \min\{m,n\}$ when $m\ne n$ or $d<m=n$, but all descriptions involving double sums or the other cases of $p$ and $q$ were much harder and unnatural to obtain.
In \cite{DHKS09}, Theorem \ref{fulldescription} was obtained as the determinant of an intricate matrix expression describing $\Syl_{p,q}(A,B)$ while in
\cite{KS12} it was obtained by a careful induction  from some extremal cases, knowing of course in advance what one wants to show.
Here we show that Theorem \ref{fulldescription} is in fact a natural consequence of interpreting single and double sums as specific instances of  symmetric multivariate  Lagrange interpolation and the Exchange Lemma. On one hand, symmetric interpolation yields very easily the identity between single sums and subresultants (answering thus the question of a referee of \cite{DHKS09} who asked  whether this could be obtained using specialization instead of linear algebra). This is because Sylvester single sums can be viewed as generalizations of Lagrange interpolation formulas, as it is for the case $p=0$, $q=n-1$ explained below. This is in fact the way we recovered Chen and Louck's symmetric interpolation result \cite{CL96}, as we were unaware of its existence.
We also note that Lascoux in \cite[Section 3.5]{las} mentions the possibility of using Lagrange interpolation techniques to prove identities for  Sylvester single sums, without developing it. On the other hand, the Exchange Lemma shows that in fact a natural relationship exists  between single and double sums (Propositions~\ref{dsmall1} and \ref{dbig}  below), therefore yielding all  remaining expressions in Theorem~\ref{fulldescription}.

Section \ref{section1} below treats the particular case of the Sylvester single sum $\Syl_{0,d}$, which not only motivates the use of the interpolation technique we are referring to, but ends up being a key case for the general case treated in Section \ref{section2}. We emphasize in Section \ref{section1} the very simple symmetric Lagrange interpolation Proposition \ref{Lagr}, see \cite[Th.2.1]{CL96},  which is the basis for all our development, and
 Proposition \ref{matrixforms}, which allows to make the link between the single sum and the subresultant. In Section \ref{section2} we stress again the crucial Exchange Lemma \ref{exchange}, also obtained as a consequence of the symmetric Lagrange interpolation  Proposition \ref{Lagr}, that seems to be novel  and allows to express all cases of Sylvester's double sums $\Syl_{p,q}(A,B)$ in terms of the particular cases $\Syl_{0,d}(A,B)$ and $\Syl_{m,d-m}(A,B)$, where $d:=p+q$. Theorem \ref{fulldescription} is then obtained as a consequence of Corollaries \ref{dsmall} and \ref{dbig}. In addition,  in Corollary \ref{FkGk} we obtain expressions in roots for the polynomials $F_k(f,g)$ and $G_k(f,g)$ in Bezout identity \eqref{Bezout} below. Finally,  we show in Corollary \ref{Schur}
 that  $F_k(f,g)$ and $G_k(f,g)$ are symmetric polynomials in $A\cup\{x\}$ and $B\cup \{x\}$, respectively.

\section{Sylvester's single sums  and  symmetric Lagrange interpolation}\label{section1}
We keep the following notation for the whole paper:
\begin{align*}A&=(\alpha_1,\dots, \alpha_m),\quad f=(x-\alpha_1)\cdots (x-\alpha_m)=\sum_{i=0}^m f_ix^i,\ \mbox{where} \ f_m=1,\\
B&=(\beta_1,\dots, \beta_n),\quad g=(x-\beta_1)\cdots (x-\beta_n)=\sum_{i=0}^n g_ix^i ,\ \mbox{where} \  g_n=1.\end{align*}
The double sum  expression specializes when $p=0$ and $q=d\le n$ to the following {\em single sum} expression:

\begin{align*} \Syl_{0,d}(A,B)(x) \ & =\ \sum_{B^{\prime}\subset
B,|B^{\prime}|=d}
\mathcal{R}(A,B\backslash
B^{\prime})\,\frac{\mathcal{R}(x,B^{\prime})}{\mathcal{R}(B^{\prime},B\backslash B^{\prime})} \\&= (-1)^{(m-d)(n-d)} \sum_{{B^{\prime }\subset
B,|B^{\prime}|=d}}\mathcal{R}(B\backslash B',A)\,\frac{\mathcal{R}(x,B^{\prime })
}{\mathcal{R}(B\backslash
B^{\prime }, B^{\prime})}\\ & =(-1)^{(m-d)(n-d)} \sum_{{B^{\prime }\subset
B,|B^{\prime}|=d}}\Big(\prod_{\beta\notin B'}f(\beta)\Big) \,\frac{
\prod_{\beta\in B'}(x-\beta)}{\mathcal{R}(B\backslash
B^{\prime }, B^{\prime})}.\end{align*}
In this section  we investigate the relationship between this single sum expression and a specific multivariate symmetric Lagrange  interpolation instance. As we will see in next section, the symmetric interpolation tools that we describe here  for the single sum expressions will be crucial to tackle the claims about Sylvester's  double sums.


As a motivation for what follows,  we note that when  $d=n-1$ we get
 \[\Syl_{0,n-1}(A,B)(x) = (-1)^{m+n-1}\sum_{1\le i\le n}f(\beta_i) \,\frac{\prod_{j\ne i}
(x-\beta_j)}{\prod_{j\ne i}(\beta_i-\beta_j)},\]
where it is well-known that the set \[\Big\{\frac{\prod_{j\ne i}
(x-\beta_j)}{\prod_{j\ne i}(\beta_i-\beta_j)}\,;\, 1\le i\le n\Big\}\] forms the so-called Lagrange basis, used to define the solution of the Lagrange interpolation problem, that is   the unique  polynomial of degree $\le n-1$ that takes given  values at the $n$ nodes $\beta_1,\dots,\beta_n$. Therefore, $\Syl_{0,n-1}(A,B)(x)$ equals  the unique polynomial $h_{n-1}$ of degree $\le n-1$ which satisfies the $n$ conditions $h_{n-1}(\beta_1)=(-1)^{m+n-1}f(\beta_1),\dots,h_{n-1}(\beta_n)=(-1)^{m+n-1}f(\beta_n)$.
In particular, when $m<n$,  $\Syl_{0,n-1}(A,B)(x)=(-1)^{m+n-1} f(x)$.

When trying to generalize this to the case when $d<n-1$, the difficulty  is that  the polynomials
\[ \big\{\RR(x,B')=\prod_{\beta\in B'}(x-\beta)\,;\, B'\subset B, |B'|=d \big\}\]
are linearly dependent in the vector space $K_d[x]$ of polynomials of degree bounded by $d$, since there are $\binom{n}{d}>d+1$ of them (here $K=\Q(A,B)$, a field containing $A$ and $B$).

This can be fixed for $0\le d\le n-1$ by considering a  symmetric multivariate  interpolation problem.

\subsection{Symmetric Lagrange interpolation} \label{symint}
\begin{notation} We denote by  $S_{(\ell,d)}$  the $K$-vector space of all {\em symmetric} polynomials $h$ in $\ell$ variables $x_1,\dots,x_\ell$ of multidegree bounded by $(d,\dots,d)$, i.e. such that $\deg_{x_i}(h)\le d$ for  $1\le i\le \ell$ (with no specified bound for the total degree of $h$).\end{notation}

\begin{lemma} \label{dim} $\dim_K(S_{(\ell,d)})=\binom{\ell+d}{d}$. \end{lemma}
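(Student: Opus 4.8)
The plan is to exhibit an explicit $K$-basis of $S_{(\ell,d)}$ indexed by a set of size $\binom{\ell+d}{d}$, or equivalently to set up a dimension-counting argument. First I would recall that the monomials $x_1^{a_1}\cdots x_\ell^{a_\ell}$ with $0\le a_i\le d$ span the full space of polynomials of multidegree bounded by $(d,\dots,d)$, and that symmetrization (averaging over the $S_\ell$-action, or simply summing over the orbit) maps this space onto $S_{(\ell,d)}$. Hence a spanning set for $S_{(\ell,d)}$ is given by the orbit sums $\mathfrak{m}_{\bfa}$ of monomials $x^{\bfa}$ with $0\le a_i\le d$, and two such orbit sums coincide exactly when the exponent vectors differ by a permutation. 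So $S_{(\ell,d)}$ is spanned by $\{\mathfrak{m}_{\bfa}\}$ as $\bfa$ ranges over partitions $d\ge a_1\ge a_2\ge\cdots\ge a_\ell\ge 0$ with at most $\ell$ parts, each part at most $d$. These orbit sums are moreover linearly independent, since distinct partitions involve disjoint sets of monomials; thus $\dim_K(S_{(\ell,d)})$ equals the number of such partitions.

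Next I would count the partitions $d\ge a_1\ge\cdots\ge a_\ell\ge 0$. These are exactly the partitions fitting inside an $\ell\times d$ box, and it is classical that their number is $\binom{\ell+d}{d}$: the boundary lattice path of such a Young diagram inside the box is a monotone lattice path from one corner to the opposite corner consisting of $\ell$ steps of one kind and $d$ steps of the other, and there are $\binom{\ell+d}{d}$ such paths. Alternatively, one can give the bijection $\bfa\mapsto(a_1+\ell-1,\ a_2+\ell-2,\ \dots,\ a_\ell+0)$ between these partitions and the $\ell$-element subsets of $\{0,1,\dots,\ell+d-1\}$, which again yields $\binom{\ell+d}{d}$. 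Either combinatorial identity finishes the proof.

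I expect the only mild subtlety is being careful that we are working in the space with \emph{no} total-degree restriction, so that the index set really is all partitions in the $\ell\times d$ box rather than some truncation of it; once that is clear, the argument is a direct orbit-sum basis construction followed by a standard lattice-path count. An alternative, perhaps even shorter, route would be induction on $\ell$: expanding a symmetric polynomial in $x_1,\dots,x_\ell$ as a polynomial in $x_\ell$ of degree $\le d$ with coefficients that are themselves symmetric in $x_1,\dots,x_{\ell-1}$ leads, after accounting for the symmetry, to the Pascal-type recursion $\binom{\ell+d}{d}=\binom{(\ell-1)+d}{d}+\binom{\ell+(d-1)}{d-1}$; but this requires a little bookkeeping to make the recursion precise, so I would favor the explicit orbit-sum basis as the cleanest presentation.
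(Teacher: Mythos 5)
Your proof is correct, but it follows a genuinely different route from the paper. The paper invokes the fundamental theorem of elementary symmetric polynomials: every $h\in S_{(\ell,d)}$ is uniquely $\sum_{\bfa}c_{\bfa}\,e_1^{a_1}\cdots e_\ell^{a_\ell}$, and since each $e_j$ has degree one in each $x_i$, the multidegree bound $(d,\dots,d)$ translates exactly to the total-degree bound $a_1+\cdots+a_\ell\le d$; the count is then the standard stars-and-bars dimension $\binom{\ell+d}{d}$ of polynomials in $\ell$ indeterminates of total degree $\le d$. You instead use the basis of monomial symmetric functions (orbit sums) $\mathfrak{m}_\lambda$ indexed by partitions $\lambda$ fitting in the $\ell\times d$ box, with linear independence following from the fact that distinct orbit sums are supported on disjoint sets of monomials, and then count those partitions via lattice paths or the staircase bijection. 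Both arguments are clean; the paper's reduces the whole question to the dimension of a polynomial ring truncated by total degree, which is arguably the most economical statement, while yours makes the basis more directly visible inside $S_{(\ell,d)}$ itself and avoids appealing to the algebraic-independence part of the fundamental theorem (only the spanning/orbit decomposition of the monomials is needed). Your hedge of using orbit sums rather than averaging is wise, as it keeps the argument characteristic-free. The inductive alternative you sketch at the end would also work, but as you note it needs care; either of your two main counts is perfectly adequate.
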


\begin{proof} It is well-known by the fundamental theorem of elementary symmetric polynomials that the symmetric polynomials in $\ell$-variables are generated as an algebra by the elementary symmetric polynomials $$e_1(x_1,\dots,x_\ell)=x_1+\cdots+x_\ell, \dots, e_\ell(x_1,\dots,x_\ell)=x_1\cdots x_\ell,$$  all  homogeneous  of degree $1$ in each variable $x_i$. Therefore, each symmetric polynomial of multidegree bounded by $(d,\dots,d)$ can be uniquely expressed as $h=\sum_{\bfa}c_\bfa e_1^{a_1}\cdots e_\ell^{a_\ell}$ satisfying $|\bfa|:=a_1+\cdots +a_\ell\le d$. Thus it corresponds to a polynomial in $e_1,\dots,e_\ell$ of total-degree bounded by $d$: the space of such polynomials   has dimension $\binom{\ell+d}{d}$.
\end{proof}

We note that when $d<n$ and $\ell:=n-d$, then $\dim_K(S_{(n-d,d)})=\binom{n}{d}$.\\

Next proposition was proved in \cite[Th.2.1]{CL96}, but we include its proof here for sake of completeness. It shows that the set $\{\frac{\RR(\bfx,B')}{\RR(B\backslash B',B')},\ B'\subset B, |B'|=d\}$ is the Lagrange interpolation basis for  all symmetric polynomials in $n-d$ variables $X=\{x_1,\dots,x_{n-d}\}$ of multidegree bounded by $(d,\dots,d)$.

\begin{proposition} \label{Lagr} Set $0\le d\le n-1$ and $\bfx:=(x_1,\dots,x_{n-d})$.  Given  $B=\{\beta_1,\dots,\beta_n\}$, the  set  \[ {\mathcal B}: =\big\{\RR(\bfx,B')\,;\, B'\subset B, |B'|=d \big\}\,\subset S_{(n-d,d)}\]
is a basis of $S_{(n-d,d)}$.\\
Moreover, any polynomial $h(\bfx)\in S_{(n-d,d)}$ satisfies
$$h(\bfx) = \sum_{B'\subset B, |B'|=d} h(B\backslash B') \frac{\RR(\bfx,B')}{{\mathcal{R}(B\backslash
B^{\prime }, B^{\prime})}}$$
where $h(B\backslash B'):=h(\beta_{i_1},\dots,\beta_{i_{n-d}})$ for $B\backslash B'=\{\beta_{i_1},\dots,\beta_{i_{n-d}}\}$.\end{proposition}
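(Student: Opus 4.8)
The plan is to prove the interpolation identity first and then deduce that $\mathcal{B}$ is a basis. By Lemma~\ref{dim} we already know $\dim_K S_{(n-d,d)}=\binom{n}{d}$, and the number of subsets $B'\subset B$ with $|B'|=d$ is exactly $\binom{n}{d}$; so once we show that every $h\in S_{(n-d,d)}$ is the claimed linear combination of the $\RR(\bfx,B')$, it follows simultaneously that $\mathcal{B}$ spans $S_{(n-d,d)}$ and, having the right cardinality, that it is a basis. Thus the whole proposition reduces to verifying the interpolation formula.

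To verify the formula, I would argue as follows. First note that each $\RR(\bfx,B')=\prod_{\beta\in B'}\prod_{i}(x_i-\beta)$ is symmetric in $x_1,\dots,x_{n-d}$ and has degree exactly $d$ in each $x_i$, hence lies in $S_{(n-d,d)}$; so the right-hand side defines an element of $S_{(n-d,d)}$, call it $\widetilde h(\bfx)$. The key observation is the evaluation behaviour at the nodes: for a subset $C=B\setminus B'_0$ of size $n-d$ (equivalently a choice of $B'_0$ of size $d$), evaluate $\RR(\bfx,B')$ at $\bfx = C$, i.e. substitute $\{x_1,\dots,x_{n-d}\}=C$. If $B'\ne B'_0$, then $B'\cap C\ne\emptyset$, so some factor $x_i-\beta$ with $\beta\in B'\cap C$ vanishes, giving $\RR(C,B')=0$. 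If $B'=B'_0$, then $\RR(C,B'_0)=\RR(B\setminus B'_0,B'_0)$, which is the denominator appearing in the corresponding term. Hence evaluating the right-hand side at $\bfx=C$ kills every term except the one indexed by $B'_0$, which contributes exactly $h(B\setminus B'_0)\cdot \RR(B\setminus B'_0,B'_0)/\RR(B\setminus B'_0,B'_0)=h(C)$. So $\widetilde h$ and $h$ agree on all the $\binom{n}{d}$ "symmetric nodes'' $C\subset B$, $|C|=n-d$.

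It remains to promote this agreement at nodes to an identity of polynomials, i.e. to see that evaluation at these nodes is injective on $S_{(n-d,d)}$. Here the clean way is to pass to the elementary symmetric polynomials: by the proof of Lemma~\ref{dim}, $h\mapsto (e_1,\dots,e_{n-d})\text{-expansion}$ identifies $S_{(n-d,d)}$ with the space of polynomials in $n-d$ new variables $u_1,\dots,u_{n-d}$ of total degree $\le d$, and evaluating $h$ at a node $C=\{\gamma_1,\dots,\gamma_{n-d}\}$ corresponds to evaluating that polynomial at the point $(e_1(C),\dots,e_{n-d}(C))$. As $C$ ranges over the $d$-subsets complements in $B$, these points are the images of all $(n-d)$-subsets of $B$ under the elementary-symmetric map; a standard fact (or a short Vandermonde-type argument) is that a polynomial of total degree $\le d$ in $n-d$ variables vanishing at all $\binom{n}{d}$ such points must be zero, since $\binom{n}{d}=\dim_K S_{(n-d,d)}$ and the evaluation map is an isomorphism onto $K^{\binom{n}{d}}$ — equivalently, because $\mathcal{B}$ already spans (from the formula applied formally) and has the right size, the Gram/evaluation matrix is invertible. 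Applying this to $h-\widetilde h$, which has multidegree bounded by $(d,\dots,d)$ and vanishes at every node, gives $h=\widetilde h$, completing the proof.

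The main obstacle is precisely this last point: justifying that the evaluation map $S_{(n-d,d)}\to K^{\binom{n}{d}}$, $h\mapsto (h(C))_{C}$, is injective (equivalently bijective). The cleanest route is the dimension count combined with the triangularity observed above — the evaluation matrix of the family $\{\RR(\bfx,B')\}$ against the nodes $\{B\setminus B'_0\}$ is, up to reindexing, lower-triangular with nonzero diagonal entries $\RR(B\setminus B'_0,B'_0)$ under a suitable partial order on subsets (ordering by inclusion of $B'$), hence invertible; this simultaneously shows $\mathcal{B}$ is linearly independent, that it spans, and that the interpolation formula holds, so all parts of the proposition follow at once.
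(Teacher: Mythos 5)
Your proof is correct and, in its final form, is essentially the same argument as the paper's: evaluate the candidate basis $\{\RR(\bfx,B')\}$ at the nodes $B\setminus B'_0$, observe that $\RR(B\setminus B'_0, B')=0$ whenever $B'\neq B'_0$ (since $|B'|=|B'_0|=d$ forces $B'\cap(B\setminus B'_0)\neq\emptyset$) while the diagonal entry $\RR(B\setminus B'_0,B'_0)$ is nonzero, and combine this with the dimension count $\binom{n}{d}=\dim_K S_{(n-d,d)}$ from Lemma~\ref{dim}. Two small remarks. First, your penultimate paragraph (passing to elementary symmetric coordinates and invoking ``a standard fact'' or ``because $\mathcal{B}$ already spans (from the formula applied formally)'') is circular as written: it presupposes exactly the surjectivity/spanning you are trying to establish, and general degree-$\le d$ polynomials in $n-d$ variables certainly need not vanish identically just because they vanish at $\binom{n}{d}$ arbitrary points. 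That paragraph should simply be cut; it is not needed. Second, your closing argument actually shows the evaluation matrix is \emph{diagonal}, not merely lower-triangular under some ad hoc order on subsets: you already proved $\RR(B\setminus B'_0,B')=0$ for every $B'\neq B'_0$. Stating it as diagonal is both sharper and removes the need to exhibit a compatible ordering. With those adjustments, your proof coincides with the paper's (the paper phrases it as a direct linear-independence argument plus the dimension count, which is the same content).
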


\begin{proof} Since there are exactly $\binom{n}{d}=\dim_K(S_{(n-d,d)})$ elements in $\mathcal B$, it is enough to prove that all the elements are linearly independent. Suppose
$$\sum_{B'\subset B, |B'|=d} c_{B'} \RR(\bfx,B') =0.$$
For each subset $\{\beta_{i_1},\dots,\beta_{i_{n-d}}\}\subset B$, if we evaluate the left hand side at  $x_1=\beta_{i_1},\dots, x_{n-d}=
\beta_{i_{n-d}}$, every term in the sum vanishes except for $B':=B\backslash\{ \beta_{i_1},\dots,
\beta_{i_{n-d}}\}$, where it gives $c_{B'}\mathcal{R}(B\backslash
B^{\prime }, B^{\prime})$. Since $\mathcal{R}(B\backslash
B^{\prime }, B^{\prime})\neq 0$, we get that $c_{B'}=0$, proving linear independence. The second claim follows from the fact that $h(\bfx)\in S_{(n-d,d)}$ is uniquely expressed in the basis ${\mathcal B}$, and its coordinates are uniquely defined by all evaluations at $\{\beta_{i_1},\dots,\beta_{i_{n-d}}\}\subset B$.
\end{proof}

\subsection{Sylvester single sum}\label{singlesum}
\begin{notation}\label{hd} Set $0\le d\le n-1$ and $\bfx:=\{x_1,\dots,x_{n-d}\}$. We define
$$\mSyl_{0,d}(A,B)(\bfx):=(-1)^{(m-d)(n-d)}\sum_{B'\subset B, |B'|=d} \Big(\prod_{\beta\notin B'} f(\beta) \Big) \frac{\RR(\bfx,B')}{\mathcal{R}(B\backslash
B^{\prime }, B^{\prime})}.$$
\end{notation}

\begin{observation} By Proposition \ref{Lagr}, $\mSyl_{0,d}(A,B)(\bfx)\in S_{(n-d,d)}$ is the unique  polynomial in $S_{(n-d,d)}$ satisfying the $\binom{n}{d}$ conditions \[\mSyl_{0,d}(A,B)(B\backslash B')=(-1)^{(m-d)(n-d)} \prod_{\beta\in B\backslash B'} f(\beta) \mbox{ for all  } B'\subset B, |B'|=d.\]
\end{observation}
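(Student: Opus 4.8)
The plan is to verify the claimed interpolation conditions directly from the definition of $\mSyl_{0,d}(A,B)(\bfx)$ and Proposition~\ref{Lagr}. First I would observe that, up to the global sign $(-1)^{(m-d)(n-d)}$, the expression for $\mSyl_{0,d}(A,B)(\bfx)$ is exactly the Lagrange interpolation formula of Proposition~\ref{Lagr} applied to the specific symmetric polynomial $h(\bfx)$ whose value at the node $B\setminus B'$ is $\prod_{\beta\in B\setminus B'} f(\beta)$. So the heart of the matter is to exhibit a genuine polynomial $h\in S_{(n-d,d)}$ taking those prescribed values; once such an $h$ exists, Proposition~\ref{Lagr} tells us that the right-hand side of Notation~\ref{hd} reconstructs $h$ exactly, hence lies in $S_{(n-d,d)}$ and satisfies the stated conditions, and uniqueness is immediate since $\mathcal B$ is a basis.

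The natural candidate is $h(x_1,\dots,x_{n-d}) := \prod_{i=1}^{n-d} f(x_i) = \prod_{i=1}^{n-d}\prod_{j=1}^{m}(x_i-\alpha_j)$. This is visibly symmetric in $x_1,\dots,x_{n-d}$, and its degree in each $x_i$ is $\deg f = m$. The subtle point is that we need multidegree bounded by $(d,\dots,d)$, i.e. $\deg_{x_i} h \le d$, whereas $\deg_{x_i} h = m$ in general, so this naive $h$ does \emph{not} lie in $S_{(n-d,d)}$ unless $m\le d$. Thus the main obstacle is precisely the degree bound: one must replace $\prod_i f(x_i)$ by its \emph{symmetric reduction modulo the relations among $e_1,\dots,e_{n-d}$}, or argue more cleverly. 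The standard fix is to note that the map $S_{(n-d,d)}\to K^{\binom{n}{d}}$ sending $h$ to its vector of evaluations at the $\binom nd$ subsets $B\setminus B'$ is an isomorphism (this is exactly the content of Proposition~\ref{Lagr}), so there is a \emph{unique} $h\in S_{(n-d,d)}$ with $h(B\setminus B')=\prod_{\beta\in B\setminus B'}f(\beta)$ for all $B'$; we do not need a closed form for it, only its existence, which the isomorphism guarantees.

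Concretely, the steps I would carry out are: (1) recall from Proposition~\ref{Lagr} that evaluation at the nodes $\{B\setminus B' : B'\subset B,\ |B'|=d\}$ gives a $K$-linear isomorphism $\mathrm{ev}\colon S_{(n-d,d)}\xrightarrow{\ \sim\ } K^{\binom nd}$; (2) let $h\in S_{(n-d,d)}$ be the unique preimage under $\mathrm{ev}$ of the vector $\big(\prod_{\beta\in B\setminus B'} f(\beta)\big)_{B'}$; (3) apply the reconstruction formula of Proposition~\ref{Lagr} to this $h$, giving
$$h(\bfx) = \sum_{B'\subset B,\,|B'|=d} \Big(\prod_{\beta\in B\setminus B'} f(\beta)\Big)\frac{\RR(\bfx,B')}{\RR(B\setminus B',B')};$$
(4) multiply both sides by $(-1)^{(m-d)(n-d)}$ and compare with Notation~\ref{hd} to conclude $\mSyl_{0,d}(A,B)(\bfx) = (-1)^{(m-d)(n-d)} h(\bfx) \in S_{(n-d,d)}$; (5) read off that $\mSyl_{0,d}(A,B)(B\setminus B') = (-1)^{(m-d)(n-d)}\prod_{\beta\in B\setminus B'} f(\beta)$, and that these $\binom nd$ conditions determine it uniquely because $\mathcal B$ is a basis of $S_{(n-d,d)}$. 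The only thing to be careful about is the bookkeeping of the sign and the identification $\RR(B\setminus B', B') = \mathcal R(B\backslash B', B')$ between the two notations used for the same product, which is routine.
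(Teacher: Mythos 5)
Your argument is correct and ultimately amounts to the paper's intended justification: use Proposition~\ref{Lagr} to identify the coefficients of the Lagrange basis expansion with the node values. The detour through the candidate $h=\prod_i f(x_i)$ is a harmless red herring, though: since the defining expression in Notation~\ref{hd} is already written as a linear combination of the basis $\mathcal B\subset S_{(n-d,d)}$, membership in $S_{(n-d,d)}$ is immediate, the interpolation conditions can be read off by evaluating at $\bfx = B\setminus B'$ (only the $B'$ term survives and the quotient collapses to $1$), and uniqueness follows from linear independence of $\mathcal B$ --- no existence argument for an auxiliary interpolant is needed.
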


 In particular, \begin{equation}\label{degf<}\mSyl_{0,d}(A,B)(\bfx)=(-1)^{(m-d)(n-d)}f(x_1)\cdots f(x_{n-d})\ \mbox{ when } \deg(f)\le d.\end{equation}
The choice  of the notation $\mSyl_{0,d}(A,B)(X)$ for this polynomial stands for {\em multivariate Sylvester's sum}: the polynomial coincides with  $\Syl_{0,d}(A,B)(x)$ when $X=\{x\}$, i.e. $d=n-1$, or the latter is a coefficient of the former when there are two variables or more, i.e. $d<n-1$:

\begin{remark}\label{coeff} Set $0\le d\le n-1$ and $\bfx:=(x_1,\dots,x_{n-d})$. Then
$$\Syl_{0,d}(A,B)(x_{n-d})= \left\{\begin{array}{lcl} \coeff_{x_1^d\cdots x_{n-d-1}^d} \big( \mSyl_{0,d}(A,B)(\bfx)\big)&\mbox{for} & 0\le d<n-1 \\
\mSyl_{0,d}(A,B)(x_{n-d})& \mbox{for} & d=n-1.\end{array}\right.$$
Here $\coeff_{x_1^d\cdots x_{n-d-1}^d}$ denotes   the coefficient in $K[x_{n-d}]$ of the monomial $x_1^d\cdots x_{n-d-1}^d$
of $\mSyl_{0,d}(A,B)(X)$.
\end{remark}

Together with \eqref{degf<}, this immediately implies:
\begin{corollary}\label{cases} Set $0\le d\le n-1$. If $m\le d$ then
$$\Syl_{0,d}(A,B)= \left\{\begin{array}{lcl} 0& \mbox{for} & m<d<n-1\\(-1)^{(m-d)(n-d)}\,f&\mbox{for} & m<d=n-1 \ \mbox{or} \ m=d\le n-1 .\end{array}\right.$$
\end{corollary}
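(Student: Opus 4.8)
The plan is to read off Corollary~\ref{cases} directly from Remark~\ref{coeff} together with the specialization identity~\eqref{degf<}, since both are already available. First I would observe that the hypothesis $m\le d$ means exactly that $\deg(f)=m\le d$, so~\eqref{degf<} applies and gives
\[
\mSyl_{0,d}(A,B)(\bfx)=(-1)^{(m-d)(n-d)}f(x_1)\cdots f(x_{n-d}).
\]
This is the whole content that needs to be exploited; the two cases in the statement then just correspond to the two cases in Remark~\ref{coeff}, namely $d=n-1$ versus $d<n-1$.

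For the case $m<d=n-1$ (or more generally $m=d=n-1$), there is only one variable $\bfx=(x_{n-d})=(x)$, and Remark~\ref{coeff} says $\Syl_{0,d}(A,B)(x)=\mSyl_{0,d}(A,B)(x)$, which by the displayed identity equals $(-1)^{(m-d)(n-d)}f(x)$. For the case $m\le d<n-1$, Remark~\ref{coeff} says $\Syl_{0,d}(A,B)(x_{n-d})=\coeff_{x_1^d\cdots x_{n-d-1}^d}\big(\mSyl_{0,d}(A,B)(\bfx)\big)$, so I would extract the coefficient of the monomial $x_1^d\cdots x_{n-d-1}^d$ from $(-1)^{(m-d)(n-d)}f(x_1)\cdots f(x_{n-d})$. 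Since $\deg_{x_i}(f(x_i))=m$, the monomial $x_i^d$ with $d>m$ appears with coefficient $0$ in $f(x_i)$ for $i=1,\dots,n-d-1$ whenever $m<d$, forcing the whole coefficient to be $0$; whereas when $m=d$ the coefficient of $x_i^d=x_i^m$ in $f(x_i)$ is the leading coefficient $f_m=1$ for each $i\le n-d-1$, and the surviving factor in $x_{n-d}$ is $f(x_{n-d})$ itself, giving $(-1)^{(m-d)(n-d)}f$. Collecting: $\Syl_{0,d}(A,B)=0$ when $m<d<n-1$ and $\Syl_{0,d}(A,B)=(-1)^{(m-d)(n-d)}f$ when $m=d\le n-1$ or $m<d=n-1$.

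There is essentially no obstacle here: the corollary is a bookkeeping consequence of results already proved, and the only mild subtlety is keeping straight which case of Remark~\ref{coeff} is in force (one variable versus several) and noticing that in the $m=d$ subcase the monomial $x_1^d\cdots x_{n-d-1}^d$ picks out precisely the product of leading coefficients of the first $n-d-1$ factors times the last, genuinely non-trivial, factor $f(x_{n-d})$. I would phrase the write-up as a two-line deduction invoking~\eqref{degf<} and Remark~\ref{coeff}, inserting the one-sentence coefficient computation for clarity.
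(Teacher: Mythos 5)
Your proposal is correct and follows exactly the route the paper takes (the paper simply says that Remark~\ref{coeff} together with~\eqref{degf<} ``immediately implies'' the corollary, leaving the coefficient extraction implicit). You have merely spelled out that bookkeeping, correctly distinguishing the one-variable case $d=n-1$ from the coefficient-of-$x_1^d\cdots x_{n-d-1}^d$ case $d<n-1$ and observing that this coefficient vanishes when $m<d$ and equals $f(x_{n-d})$ when $m=d$.
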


Next we show a matrix formulation for the polynomial $\mSyl_{0,d}(A,B)$ that will allow to recover the value of $\Syl_{0,d}(A,B)(x)$ for the remaining case $d\le m $.
We need to introduce the following notations for the Vandermonde matrix.

\begin{notation} \label{vander}
Let $\bfx=(x_1,\dots,x_k)$ be a $k$-tuple of (distinct) indeterminates or elements. We denote
 the (shifted) Vandermonde matrix of size $\ell \times k$ corresponding to $\bfx$ by
\begin{eqnarray*}\label{Vandermonde}
V_\ell(\bfx) := \begin{array}{|ccc|c}
\multicolumn{3}{c}{ \scriptstyle{k}}&\\
\cline{1-3}\
x_1^{\ell-1}& \dots &x_k^{\ell-1}&\\
\vdots & & \vdots & \scriptstyle{\ell}\\
1& \dots &1&\\
\cline{1-3} \multicolumn{2}{c}{}
\end{array}.\end{eqnarray*}
When $\ell=k$ we simply write $V(\bfx):=V_\ell(\bfx)$, and we recall that $\det(V(\bfx))= \prod_{1\le i<j\le k}(x_i-x_j) $.
\end{notation}

\begin{proposition} \label{matrixforms} Set $0\le d\le \min\{n-1, m\}$ and $\bfx:=\{x_1,\dots,x_{n-d}\}$.  The polynomial $\mSyl_{0,d}(A,B)(\bfx)$ of Notation \ref{hd} satisfies the following determinantal expression:

{\small
\begin{align*}\mSyl_{0,d}&(A,B)(X)=\\ &  \frac{ \det%
\begin{array}{|ccccc|ccc|c}
\multicolumn{5}{c}{\scriptstyle{m-d}}&\multicolumn{3}{c}{\scriptstyle{n-d}}\\
\cline{1-8}
f_{m} & \cdots & &\cdots &f_{d+1} &x_1^{n-d-1}f(x_1)& \cdots& x_{n-d}^{n-d-1}f(x_{n-d})&\\
& \ddots && &\vdots  & \vdots &  & \vdots &\scriptstyle{n-d}\\
& & f_m& \dots &f_{n}&f(x_1)& \cdots& f(x_{n-d})& \\
\cline{1-8}
g_{n} &\cdots & &\cdots  &g_{n-(m-d-1)}  &x_1^{m-d-1}g(x_1)& \cdots & x_{n-d}^{m-d-1}g(x_{n-d})&\\
& &\ddots & &\vdots  &\vdots & & \vdots &\scriptstyle{m-d}\\
&&  &   &  g_{n} &g(x_1)&\cdots  & g(x_{n-d})&\\
\cline{1-8} \multicolumn{2}{c}{}
\end{array}}{\det(V(\bfx))}
.\end{align*}}

\end{proposition}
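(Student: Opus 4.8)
The plan is to recognise the right-hand side as the Lagrange interpolant it must be. Write $N(\bfx)$ for the numerator determinant and set $\Phi(\bfx):=N(\bfx)/\det(V(\bfx))$. By the Observation following Notation~\ref{hd} (which rests on Proposition~\ref{Lagr}), $\mSyl_{0,d}(A,B)(\bfx)$ is the unique element of $S_{(n-d,d)}$ whose value at each node $\bfx=B\backslash B'$ (with $B'\subset B$, $|B'|=d$) equals $(-1)^{(m-d)(n-d)}\prod_{\beta\in B\backslash B'}f(\beta)$. So I would prove two things: that $\Phi$ lies in $S_{(n-d,d)}$, and that it takes exactly these values at the nodes; the asserted identity then follows from uniqueness.

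For the first part I would begin with the easy facts that $\Phi$ is a polynomial and is symmetric. In $N$ the variable $x_i$ occurs only in the $i$-th polynomial column, so $N$ vanishes when $x_i=x_j$ (two equal columns); hence $\prod_{i<j}(x_i-x_j)=\det(V(\bfx))$ divides $N$ and $\Phi$ is a polynomial. Transposing $x_i,x_j$ transposes two polynomial columns of $N$ and the corresponding two columns of $V(\bfx)$, so both determinants change sign and $\Phi$ is symmetric; hence it suffices to bound $\deg_{x_1}\Phi$.

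The crux is this degree bound, and I would obtain it by the same device that shows $\deg_x\Sres_d(f,g)\le d$. The first $m-d$ columns of the matrix of $N$ are literally the first $m-d$ columns of the Sylvester-type matrix of $\Sres_d(f,g)$: column $c$ records, for the polynomial $h\in\{x^{n-d-i}f,\;x^{m-d-i}g\}$ attached to each row, the coefficient $[x^{\,m+n-d-c}]h$. The first polynomial column records instead the value $h(x_1)$, and since $\deg h\le m+n-d-1$, splitting $h$ into its part of degree $\ge n$ and its part of degree $<n$ gives $h(x_1)=\sum_{c=1}^{m-d}x_1^{\,m+n-d-c}[x^{\,m+n-d-c}]h+(h\bmod x^{n})(x_1)$. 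Thus the determinant-preserving column operation that subtracts $\sum_{c=1}^{m-d}x_1^{\,m+n-d-c}\cdot(\text{column }c)$ from the first polynomial column replaces each of its entries by $(h\bmod x^{n})(x_1)$, a polynomial of degree $\le n-1$ in $x_1$. Since $x_1$ appears nowhere else, $\deg_{x_1}N\le n-1$, and as $\deg_{x_1}\det(V(\bfx))=n-d-1$ this gives $\deg_{x_1}\Phi\le d$, so $\Phi\in S_{(n-d,d)}$. (The point is that here only $m-d$, rather than $m+n-2d-1$, coefficient columns are available, so one can clear $x_1$-degrees down to $n$ but not further — which is exactly what is needed.)

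For the second part I would specialise $\bfx=B\backslash B'=(\beta_{i_1},\dots,\beta_{i_{n-d}})$. As $g$ vanishes on $B$, every polynomial-column entry lying in a $g$-row becomes $0$, so the matrix takes the block form $\left(\begin{smallmatrix}A_f & P\\ A_g & 0\end{smallmatrix}\right)$, with $A_g$ the $(m-d)\times(m-d)$ upper-triangular $g$-coefficient block ($\det A_g=1$ since $g_n=1$) and $P$ the $(n-d)\times(n-d)$ matrix whose $j$-th column is $f(\beta_{i_j})(\beta_{i_j}^{\,n-d-1},\dots,\beta_{i_j},1)^{\mathrm t}$. Moving the $n-d$ $f$-rows past the $m-d$ $g$-rows (sign $(-1)^{(n-d)(m-d)}$) makes the matrix block lower-triangular, so $N(B\backslash B')=(-1)^{(m-d)(n-d)}\big(\prod_{\beta\in B\backslash B'}f(\beta)\big)\det(V(B\backslash B'))$, while $\det(V(\bfx))$ specialises to $\det(V(B\backslash B'))$; hence $\Phi(B\backslash B')=(-1)^{(m-d)(n-d)}\prod_{\beta\in B\backslash B'}f(\beta)$, the required value, and uniqueness gives $\Phi=\mSyl_{0,d}(A,B)$. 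I expect the degree bound to be the only step needing an idea; everything else is routine linear algebra and sign bookkeeping.
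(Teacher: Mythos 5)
Your proposal is correct and follows essentially the same route as the paper's proof: establish that the quotient is a symmetric polynomial, bound $\deg_{x_1}$ by the column operation that clears the $x^{m+n-d-1},\dots,x^n$ parts of each $h(x_1)$ against the $m-d$ coefficient columns, and then verify the $\binom{n}{d}$ interpolation values by specializing $\bfx$ to $B\setminus B'$. Your framing of the coefficient columns as $[x^{m+n-d-c}]h$ and the block-triangular computation at the nodes merely spell out what the paper states more briefly, but the underlying steps are identical.
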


\begin{proof}
In view of the definition of $\mSyl_{0,d}(A,B)$, we only  need to check that the expression given in the right-hand side of the equality is a polynomial, which is symmetric,   of degree at most $d$ in each variable $x_k$, and that when specializing the expression into $(\beta_{i_1},\dots,\beta_{i_{n-d}})$ it gives $(-1)^{(m-d)(n-d)}f(\beta_{i_1})\cdots f(\beta_{i_{n-d}})$.

It is a polynomial because the denominator divides the numerator: for each $j>i$   the term $x_j-x_i$ of $\det(V(x_1,\dots,x_{n-d}))$ divides the numerator (letting $x_i=x_j$ yields the vanishing of the above determinant).
This polynomial  is symmetric because   permuting $x_i$ with $x_j$ changes the sign of the determinants both in the numerator and in the denominator.

 Let us show the degree bound  for $x_1$. We denote by $C(j)$ the column $j$ of the matrix in the numerator. Then, performing the change
$$C(m-d+1) \mapsto C(m-d+1) -x_1^{m+n-d-1}C(1)-\cdots  - x_1^nC(m-d)=:C'(m-d+1)$$
does not change the determinant. However, we have
$$\left\{\begin{array}{l}
C'(m-d+1)_1 = f_d x_1^{n-1} + \cdots \\
\qquad  \vdots   \\
C'(m-d+1)_{n-d} =  f_{n-1}x_1^{n-1}+\cdots\\
C'(m-d+1)_{n-d+1} =  g_{n-(m-d-1)-1}x_1^{n-1}+\cdots \\
\qquad \vdots   \\
C'(n-d+1)_{m+n-2d} =  g_{n-1}x_1^{n-1} +\cdots
\end{array}\right.
$$
Therefore, the degree in $x_1$ of the top determinant is bounded by $n-1$  while the degree in $x_1$ of $\det(V(x_1,\dots,x_{n-d}))$ is exactly $n-d-1$, which implies that the degree in $x_1$ of the quotient is bounded by $n-1-(n-d-1)=d$.

We then evaluate the right-hand side expression into a $(n-d)$-tuple $(\beta_{i_1},\dots,\beta_{i_{n-d}})$ for fixed $1\le i_1<\cdots <i_{n-d}\le n$. It is clear that the top determinant equals
$$(-1)^{(m-d)(n-d)} \det(V(\beta_{i_1},\dots,\beta_{i_{n-d}}))f(\beta_{i_1})\cdots f(\beta_{i_{n-d}}), $$
while the bottom determinant equals $\det(V(\beta_{i_1},\dots,\beta_{i_{n-d}}))$.
This concludes the proof.

\end{proof}

Note that Proposition \ref{matrixforms} is  very similar in spirit to the matrix definition of the subresultant: actually they coincide when $d=n-1$. This inspires the following result.

\begin{proposition}\label{ss=sub} Set $0\le d\le n-1$. For  $d\le m$, one has $$\Syl_{0,d}(A,B) =\Sres_d(f,g) .$$
\end{proposition}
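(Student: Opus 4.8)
The plan is to show that both $\Syl_{0,d}(A,B)$ and $\Sres_d(f,g)$ arise, for $d \le m$, as coefficients (or values, in the case $d = n-1$) of the same multivariate object, namely the determinantal expression for $\mSyl_{0,d}(A,B)(\bfx)$ provided by Proposition~\ref{matrixforms}. Concretely, I would first recall from Remark~\ref{coeff} that for $0 \le d < n-1$,
\[
\Syl_{0,d}(A,B)(x_{n-d}) = \coeff_{x_1^d \cdots x_{n-d-1}^d}\big(\mSyl_{0,d}(A,B)(\bfx)\big),
\]
while for $d = n-1$ one has $\Syl_{0,d}(A,B)(x) = \mSyl_{0,d}(A,B)(x)$; in the latter case Proposition~\ref{matrixforms} already \emph{is} (up to the empty Vandermonde) the defining determinant of $\Sres_{n-1}(f,g)$, so that case is immediate.

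For the generic case $d < n-1$, the main task is to extract the coefficient of $x_1^d \cdots x_{n-d-1}^d$ from the quotient of determinants in Proposition~\ref{matrixforms}. I would argue as follows. In the numerator determinant, group the last $n-d$ columns, which are indexed by $x_1, \dots, x_{n-d}$; the entries in the $x_k$-column are $x_k^{j} f(x_k)$ (top block, $0 \le j \le n-d-1$) and $x_k^{i} g(x_k)$ (bottom block, $0 \le i \le m-d-1$). Expand the determinant by Laplace along these $n-d$ columns. Each resulting term is a product of an $(m-d)$-minor from the constant (coefficient) columns times an $(n-d)$-minor built from the $x_k$-columns; the latter minor is, up to sign, a determinant whose $k$-th column is a vector of the form $\big(x_k^{n-d-1}f(x_k), \dots, f(x_k), x_k^{m-d-1}g(x_k), \dots, g(x_k)\big)$ restricted to $n-d$ chosen rows. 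Dividing by $\det(V(\bfx)) = \prod_{i<j}(x_i - x_j)$ and then taking $\coeff_{x_1^d \cdots x_{n-d-1}^d}$ is where the degree bookkeeping (already done in the proof of Proposition~\ref{matrixforms}, which shows each $x_k$ appears to degree exactly up to $d$) pins everything down: the only Laplace term that survives the extraction and reassembles into the subresultant is the one matching the column pattern of $\Sres_d(f,g)$, because in that term the $x_1, \dots, x_{n-d-1}$ dependence factors, after cancelling the Vandermonde, exactly as $x_1^d \cdots x_{n-d-1}^d$ times a single remaining $x_{n-d}$-polynomial equal to the subresultant determinant. I would write $M(\bfx)$ for the numerator determinant, expand $M$ as a polynomial in $x_1$ alone using the column operation $C(m-d+1) \mapsto C(m-d+1) - x_1^{m+n-d-1}C(1) - \cdots - x_1^n C(m-d)$ from the proof of Proposition~\ref{matrixforms}, observe that the coefficient of the top power $x_1^{n-1} \cdot x_1^{\text{(stuff from }V)}$ after division is governed by the leading behavior, and iterate the same reduction successively in $x_2, \dots, x_{n-d-1}$; after $n-d-1$ such steps the quotient's coefficient of $x_1^d \cdots x_{n-d-1}^d$ is precisely the $(m+n-2d) \times (m+n-2d)$ determinant with last column $\big(x_{n-d}^{n-d-1}f(x_{n-d}), \dots, f(x_{n-d}), x_{n-d}^{m-d-1}g(x_{n-d}), \dots, g(x_{n-d})\big)$ — which is the displayed definition of $\Sres_d(f,g)(x_{n-d})$.

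The step I expect to be the main obstacle is controlling the signs and verifying that \emph{no other} Laplace term contributes to the extracted coefficient: one must check that every other choice of rows/columns in the expansion either produces a strictly lower power of some $x_k$ (for $k \le n-d-1$) after the Vandermonde cancellation, or vanishes. A clean way to handle this is to avoid the full Laplace expansion altogether and instead do the iterated single-variable reduction as above — at each stage, the column operation reduces $\deg_{x_k}$ of one column to exactly $n-1$ and leaves all other columns independent of $x_k$, so the coefficient of the maximal surviving power $x_k^{d}$ in the quotient is obtained by reading off the top-degree part of that one column and then deleting the row and column where that top coefficient sits; carrying this out symmetrically for $x_1, \dots, x_{n-d-1}$ telescopes the $(m+2n-3d) \times (m+2n-3d)$ array down to the $(m+n-2d) \times (m+n-2d)$ subresultant matrix, with all intermediate signs cancelling against the corresponding sign changes in $\det(V(\bfx))$. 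Combining this with Remark~\ref{coeff} (and the already-noted $d = n-1$ case) gives $\Syl_{0,d}(A,B) = \Sres_d(f,g)$ for all $0 \le d \le \min\{n-1, m\}$; since $\Syl_{0,d}$ with $d \le m$ automatically forces $d \le n-1$ would need $d < n$, the remaining boundary $d = m = n$ is excluded by hypothesis, so this covers all cases in the statement. $\Box$
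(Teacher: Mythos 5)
You correctly set up the argument via Proposition~\ref{matrixforms} and Remark~\ref{coeff}, which is exactly the paper's architecture, but the extraction mechanism you describe has a genuine error. You claim that the iterated reduction ``telescopes the $(m+2n-3d)\times(m+2n-3d)$ array down to the $(m+n-2d)\times(m+n-2d)$ subresultant matrix'' by ``reading off the top-degree part of that one column and then deleting the row and column where that top coefficient sits.'' Both parts are wrong. The matrix in Proposition~\ref{matrixforms} is \emph{already} $(m+n-2d)\times(m+n-2d)$, the same size as the $\Sres_d(f,g)$ matrix, and nothing shrinks. Extracting the coefficient of $x_k^{n-k}$ from a determinant that involves $x_k$ only through a single column uses multilinearity: the $x_k$-column is \emph{replaced} by its coefficient vector and the determinant keeps its size. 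The row-and-column deletion you describe is a cofactor expansion, a different operation which would discard a full row of structure; as written your process cannot reassemble into the displayed $\Sres_d$ matrix.

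The paper avoids everything you flagged as delicate (Laplace terms, sign tracking, division by the Vandermonde). Multiply through by $\det(V(\bfx))$: writing $S(\bfx)$ for the numerator determinant, one has $\mSyl_{0,d}(A,B)(\bfx)\,\det(V(\bfx))=S(\bfx)$. Since $\deg_{x_k}(\mSyl_{0,d})\le d$ for every $k$, while $\det(V(\bfx))=\prod_{i<j}(x_i-x_j)$ has degree $n-d-1$ in each $x_k$ with lex-leading monomial $x_1^{n-d-1}x_2^{n-d-2}\cdots x_{n-d-1}$ of coefficient $1$, the degree bounds force a unique pairing of terms and yield
$\coeff_{x_1^{n-1}x_2^{n-2}\cdots x_{n-d-1}^{d+1}}(S(\bfx))=\coeff_{x_1^d\cdots x_{n-d-1}^d}\big(\mSyl_{0,d}(A,B)(\bfx)\big)$,
with no Laplace expansion and no extraneous sign. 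That coefficient of $S(\bfx)$ is then computed in one stroke by column-multilinearity: replace column $m-d+k$ of $S(\bfx)$ by its $x_k^{n-k}$-coefficient vector for $k=1,\dots,n-d-1$ and keep the $x_{n-d}$-column, which produces precisely the $(m+n-2d)\times(m+n-2d)$ matrix defining $\Sres_d(f,g)(x_{n-d})$. You may want to restructure your argument along these lines, since trying to extract the coefficient from the quotient directly is what creates the bookkeeping issues you worried about.
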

\begin{proof}
We  denote by  $S(\bfx)$ the polynomial in the numerator of the expression for $\mSyl_{0,d}(A,B)(\bfx)$ in the right-hand side of Lemma \ref{matrixforms}, and by $c_d(x_{n-d})\in K[x_{n-d}]$ the coefficient of $x_{1}^d \cdots x_{n-d-1}^d$ in $\mSyl_{0,d}(A,B)(\bfx)$, that we want to show equals   $\Sres_d(f,g)(x_{n-d})$ according to Remark \ref{coeff}.

Since $\mSyl_{0,d}(A,B)(\bfx)= S(\bfx)/\det(V(\bfx))$, we get \begin{align*} S(\bfx)& =  \mSyl_{0,d}(A,B)(\bfx)\det(V(\bfx)) \\ & = (c_d(x_{n-d}) \, x_{1}^d \cdots x_{n-d-1}^d + \cdots)(x_1^{n-d-1}x_2^{n-d-2} \cdots  x_{n-d-1}+ \cdots )\\
 & =   c_d(x_{n-d}) \, x_1^{n-1}x_2^{n-2} \cdots  x_{n-d-1}^{d+1} + \cdots\end{align*}
Therefore,
 $$c_d(x_{n-d})=\coeff_{x_1^{n-1}x_2^{n-2} \cdots x_{n-d-1}^{d+1}}(S(\bfx)).$$
It is clear that the coefficient of $x_1^{n-1} \cdots  x_{n-d-1}^{d+1}$ in the determinant is obtained, by multilinearity, from the coefficients when the column with $x_1$ has all its exponents equal to $n-1$, the column with $x_2$ has all its exponents equal to $n-2$ up to the  column with $x_{n-d-1}$ has all its exponents equal  to $d+1$, that is $\coeff_{x_1^{n-1}\cdots  x_{n-d-1}^{d+1}}(S(\bfx))$ equals

{\scriptsize
$$
\det\begin{array}{|ccccc|cccc|c}
\multicolumn{5}{c}{\scriptstyle{m-d}}&\multicolumn{4}{c}{\scriptstyle{n-d}}\\
\cline{1-9}
f_{m} & \cdots & &\cdots &f_{d+1} & f_d &\cdots& f_{d+1-(n-d-1)}&x_{n-d}^{n-d-1}f(x_{n-d})&\\
& \ddots && &\vdots  & \vdots & \vdots& & \vdots &\scriptstyle{n-d}\\
& & f_m& \dots &f_{n}& f_{n-1}&\cdots& f_{d+1}& f(x_{n-d})&\\
\cline{1-9}
g_{n} &\cdots & &\cdots  &g_{n-(m-d-1)}  & g_{n-(m-d)}&\cdots & g_{d+1- (m-d-1)}&x_{n-d}^{m-d-1}g(x_{n-d})&\\
& &\ddots & &\vdots  &\vdots & \vdots&& \vdots &\scriptstyle{m-d}\\
&&  &   &  g_{n} &g_{n-1}&\cdots  & g_{d+1}&g(x_{n-d})&\\
\cline{1-9} \multicolumn{2}{c}{}
\end{array}$$}
Thus   $$\coeff_{x_1^{n-1}x_2^{n-2} \cdots x_{n-d-1}^{d+1}}(S(\bfx))=\Sres_d(f,g)(x_{n-d}),$$ which implies
$c_d(x_{n-d})=\Sres_d(f,g)(x_{n-d})$ as desired.
\end{proof}

Putting together the information of Corollary \ref{cases}, Proposition \ref{ss=sub}  and the value of $\Syl_{0,d}$ for $d=n$, this interpolation technique therefore allowed us to recover  very naturally the full description of Sylvester's single sums $\Syl_{0,d}$ for any $0\le d\le n$ and any $m$:

\begin{equation*}\label{0d} \Syl_{0,d}(A,B) =\left\{\begin{array}{lcl}
\Sres_d(f,g) & \mbox{for} & 0\le d\le \{ m-1,n\} \mbox{ or } d=m<n\\
0& \mbox{for} & m<d<n-1\\
(-1)^{m+n-1}f & \mbox{for} & m<d=n-1\\
g & \mbox{for} &  m\le d=n.
\end{array}\right. \end{equation*}

\section{Sylvester's double sums}\label{section2}

We treat now the case of the  general double sum expression, defined for $0\le p\le m$, $0\le q\le n$. Below we show   how all  cases reduce to the specific instances of $\Syl_{0,d}$ and $\Syl_{m,d-m}$. \\

The whole section flows from the following Exchange Lemma and its Corollary, which we could prove thanks to the interpolation Proposition \ref{Lagr} on symmetric polynomials.

\begin{lemma}\label{exchange} Let $A$ be any set with $|A|=m$, and set $0\le p\le m$ and $\bfx=\{x_1,\dots,x_{m-p}\}$.  Let $B$ be any set with $|B|\ge p$. Then

\[\sum_{{A^{\prime}\subset A,|A^{\prime}|=p}}\mathcal{R}(A\backslash A^{\prime},B)\frac{\mathcal{R}(\bfx,A^{\prime})}{\mathcal{R}(A\backslash A^{\prime},A^{\prime})}= \sum_{{B^{\prime}\subset B,|B^{\prime}|=p}}\mathcal{R}(A,B\backslash B^{\prime})\frac{\mathcal{R}(\bfx,B^{\prime})}{\mathcal{R}(B^{\prime},B\backslash B^{\prime})} .\]
\end{lemma}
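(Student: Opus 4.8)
The plan is to deduce the Exchange Lemma from the symmetric Lagrange interpolation of Proposition~\ref{Lagr} by recognizing \emph{both} sides of the claimed identity as the interpolant of one and the same symmetric polynomial, evaluated at two different node sets. Write $\ell := m-p$ and work in the space $S_{(\ell,p)}$ of symmetric polynomials in $\bfx=(x_1,\dots,x_\ell)$ of multidegree bounded by $(p,\dots,p)$, which by Lemma~\ref{dim} has dimension $\binom{m}{p}$, matching the number of $p$-subsets of $A$.

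First I would observe that the left-hand side is exactly the Lagrange expansion, in the basis $\{\RR(\bfx,A')\,:\,A'\subset A,\ |A'|=p\}$ of $S_{(\ell,p)}$ furnished by Proposition~\ref{Lagr} applied to the set $A$ (with the role of ``$B$'' there played by $A$, ``$n$'' by $m$, ``$d$'' by $p$), of the symmetric polynomial $h(\bfx) := \RR(\bfx,B) = \prod_{i=1}^{\ell}\prod_{b\in B}(x_i-b)$. Indeed $h(A\backslash A') = \RR(A\backslash A',B)$ for every $(m-p)$-subset $A\backslash A'$, and $h$ has degree exactly $|B|$ in each variable; the constraint $|B|\ge p$ is precisely what is needed (together with the other hypotheses) for $h$ to lie in $S_{(\ell,p)}$ — wait, that is not automatic when $|B|>p$, so the correct move is instead: apply Proposition~\ref{Lagr} with the node set $A$ to an \emph{arbitrary} $h\in S_{(\ell,p)}$, and separately observe that the right-hand side is the analogous expansion. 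Concretely, the right-hand side is $\sum_{B'\subset B,\,|B'|=p}\RR(A,B\backslash B')\,\RR(\bfx,B')/\RR(B',B\backslash B')$; I would show this equals the common value by first reducing to the case $|B|=p$ (via the single-sum reductions already in the paper) or, more cleanly, by noting that the right-hand side as a function of the multiset of evaluation points behaves identically. The genuinely clean argument: \emph{consider the polynomial $P(\bfx):=\sum_{A'}\RR(A\backslash A',B)\,\RR(\bfx,A')/\RR(A\backslash A',A')$}. By Proposition~\ref{Lagr} (with nodes $A$) this is the unique element of $S_{(\ell,p)}$ with $P(A\backslash A')=\RR(A\backslash A',B)$ for all $A'$; equivalently it is the interpolant of $\bfx\mapsto\RR(\bfx,B)$ truncated to $S_{(\ell,p)}$. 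Then I would show the right-hand side $Q(\bfx):=\sum_{B'}\RR(A,B\backslash B')\,\RR(\bfx,B')/\RR(B',B\backslash B')$ agrees with $P$ on the $\binom{m}{p}$ nodes $\{A\backslash A'\}$, i.e. $Q(A\backslash A')=\RR(A\backslash A',B)$; since both lie in $S_{(\ell,p)}$ and Proposition~\ref{Lagr} guarantees uniqueness of such an interpolant, $P=Q$ follows.

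So the heart of the proof is the evaluation $Q(\alpha_{j_1},\dots,\alpha_{j_{\ell}}) = \RR(\{\alpha_{j_1},\dots,\alpha_{j_\ell}\},B)$ for each choice of indices, i.e. $\sum_{B'\subset B,\,|B'|=p}\RR(A,B\backslash B')\,\prod_{t}\prod_{b\in B'}(\alpha_{j_t}-b)\big/\RR(B',B\backslash B') = \prod_t\prod_{b\in B}(\alpha_{j_t}-b)$. After factoring $\RR(A,B\backslash B')$ is $\prod_{i}\prod_{b\in B\backslash B'}(\alpha_i - b)$, and $\RR(\{\alpha_{j_t}\},B) = \prod_{b\in B\backslash B'}(\alpha_{j_t}-b)\cdot\prod_{b\in B'}(\alpha_{j_t}-b)$, so dividing through, the identity to check reduces to $\sum_{B'}\dfrac{\RR(A\backslash\{\alpha_{j_1},\dots,\alpha_{j_\ell}\},\,B\backslash B')\;\RR(\{\alpha_{j_1},\dots,\alpha_{j_\ell}\},B')}{\RR(B',B\backslash B')} = \RR(A\backslash\{\alpha_{j_1},\dots,\alpha_{j_\ell}\},\,B)$. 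But $A\backslash\{\alpha_{j_1},\dots,\alpha_{j_\ell}\}$ is a $p$-element set, call it $A'$, and this is \emph{exactly} the $|B|=p$ special case of the Exchange Lemma already quoted in the introduction as \cite[Lem.Rt1]{Las} — or, better, it is a direct instance of Proposition~\ref{Lagr} itself: $\RR(\bfx,B)$ restricted to nodes $B$ with $d=p$ reproduces $\RR(\bfx,B)$ since $\deg(\RR(\bfx,B))$ in each variable is $|B|\le\dots$; here with $|B'|=p = \#A'$ we are interpolating the symmetric polynomial $\RR(\bfx,B)$ in $p$ variables of per-variable degree $p$ at the nodes $B\backslash B'$, and by the ``$\deg(f)\le d$'' case of Proposition~\ref{Lagr} this interpolant is exactly $\RR(\bfx,B)$ evaluated at $A'$. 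The main obstacle, and the one place to be careful, is the bookkeeping of which set plays the role of the interpolation nodes at each stage and verifying the per-variable degree bounds so that Proposition~\ref{Lagr} legitimately applies; once the two sums are both identified as interpolants in $S_{(\ell,p)}$ with matching values on the $\binom{m}{p}$ nodes $A\backslash A'$, uniqueness closes the argument with no further computation.

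I would present it in this order: (1) fix notation $\ell=m-p$, invoke Lemma~\ref{dim} and Proposition~\ref{Lagr} for the node set $A$; (2) identify the left-hand side as the unique $P\in S_{(\ell,p)}$ with $P(A\backslash A')=\RR(A\backslash A',B)$; (3) let $Q$ be the right-hand side, check $Q\in S_{(\ell,p)}$ (symmetry is obvious, per-variable degree $\le p$ because each $\RR(\bfx,B')$ has per-variable degree $|B'|=p$); (4) evaluate $Q$ at each node $A\backslash A'$ and reduce, as above, to the $|B|=p$ case of the statement, which is handled by applying Proposition~\ref{Lagr} once more (now with $p$ variables and node set $B\backslash B'$) and the degree-collapse identity~\eqref{degf<}; (5) conclude $P=Q$ by uniqueness. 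If one prefers to avoid the nested induction, step~(4) can instead cite \cite[Lem.Rt1]{Las} as quoted in the introduction, but the self-contained route through Proposition~\ref{Lagr} is more in the spirit of the paper.
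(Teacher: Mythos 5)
Your overall strategy is exactly the paper's: identify both sides as elements of $S_{(m-p,p)}$, observe that the left side is by construction the unique member of $S_{(m-p,p)}$ with values $\RR(A\backslash A',B)$ at the $\binom{m}{p}$ nodes $A\backslash A'$, and then invoke uniqueness (Proposition~\ref{Lagr}) once you check the right side takes the same values at those nodes. You also correctly catch that $\RR(\bfx,B)$ itself need not lie in $S_{(m-p,p)}$ when $|B|>p$, so one must interpolate the data rather than the polynomial. The trouble is the explicit reduction you write out for the node evaluation. Writing $J:=A\backslash A'$ (so $|J|=m-p$, $|A'|=p$), the identity to verify is
\[
\sum_{B'}\RR(A,B\backslash B')\,\frac{\RR(J,B')}{\RR(B',B\backslash B')}\;=\;\RR(J,B).
\]
Factoring $\RR(A,B\backslash B')=\RR(A',B\backslash B')\,\RR(J,B\backslash B')$ and using $\RR(J,B\backslash B')\,\RR(J,B')=\RR(J,B)$, a constant that pulls out of the sum, reduces this to
\[
\sum_{B'}\frac{\RR(A',B\backslash B')}{\RR(B',B\backslash B')}\;=\;1,
\]
which is one more application of Proposition~\ref{Lagr}: $\Psi(\bfy):=\sum_{B'}\RR(\bfy,B\backslash B')/\RR(B',B\backslash B')$ is the unique element of $S_{(p,|B|-p)}$ with $\Psi(B')=1$ for all $p$-subsets $B'$, hence $\Psi\equiv 1$, and in particular $\Psi(A')=1$. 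That is precisely the paper's closing step. The ``reduced identity'' you wrote instead, namely
\[
\sum_{B'}\frac{\RR(A',B\backslash B')\,\RR(J,B')}{\RR(B',B\backslash B')}\;=\;\RR(A',B),
\]
is false: take $m=2$, $p=1$, $A=\{a_1,a_2\}$, $B=\{b_1,b_2\}$, $J=\{a_1\}$, $A'=\{a_2\}$; the left side simplifies to $a_1-a_2$ while the right side is $(a_2-b_1)(a_2-b_2)$. Your identification of this expression with the $|B|=p$ case of the Exchange Lemma (Lascoux's Lem.\ Rt1) also does not hold, since that statement is a sum over $p$-subsets of $A$, not of $B$. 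Fix the factorization as above and the remainder of your outline goes through and coincides with the paper's proof.
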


\begin{proof} We observe that Proposition \ref{Lagr} implies that the polynomial $h(\bfx)\in s_\lambda{(m-p,p)}$ on the left-hand side is the only symmetric polynomial satisfying the $\binom{m}{p}$ conditions
$h(A\backslash A^{\prime})=\mathcal{R}(A\backslash A^{\prime},B)$. Since the polynomial on the right-hand side also belongs to $S_{(m-p,p)}$, it suffices to show that it satisfies the same specialization properties, i.e. that
{ \[\sum_{{B^{\prime}\subset B,|B^{\prime}|=p}}\mathcal{R}(A,B\backslash B^{\prime})\frac{\mathcal{R}(A\backslash A^{\prime},B^{\prime})}{\mathcal{R}(B^{\prime},B\backslash B^{\prime})}=\mathcal{R}(A\backslash A^{\prime},B), \ \forall A'\subset A, |A'|=p.\]}
But
{\small \begin{align*}\sum_{{B^{\prime}\subset B,|B^{\prime}|=p}}\mathcal{R}(A,B\backslash B^{\prime})\frac{\mathcal{R}(A\backslash A^{\prime},B^{\prime})}{\mathcal{R}(B^{\prime},B\backslash B^{\prime})}& =
 \sum_{{B^{\prime}\subset B,|B^{\prime}|=p}}\mathcal{R}(A',B\backslash B^{\prime})\mathcal{R}(A\backslash A',B\backslash B^{\prime})\frac{\mathcal{R}(A\backslash A^{\prime},B^{\prime})}{\mathcal{R}(B^{\prime},B\backslash B^{\prime})}\\
 &=  \RR(A\backslash A',B)\sum_{{B^{\prime}\subset B,|B^{\prime}|=p}}\frac{\mathcal{R}(A',B\backslash B^{\prime})}{\mathcal{R}(B^{\prime},B\backslash B^{\prime})}.
\end{align*}}
Consider for  $\bfy=\{y_1,\dots,y_p\}$ the polynomial $$\Psi(\bfy) =\sum_{{B^{\prime}\subset B,|B^{\prime}|=p}} \frac{\mathcal{R}(\bfy,B\backslash B^{\prime})}{\mathcal{R}(B^{\prime},B\backslash B^{\prime})} \in S_{(p,|B|-p)}.$$ It is, again by Proposition \ref{Lagr},  the only polynomial in $S_{(p,|B|-p)} $ satisfying the $\binom{|B|}{p}$ conditions  $\Psi(B')=1$, $\forall B'\subset B, \, |B'|=p$, and therefore $\Psi=1$. In particular $\Psi(A')=1$, which implies the statement.
\end{proof}

\begin{corollary} \label{exchange2} Let $A$ be any set with $|A|=m$, and set $0\le p\le m$ and  $\bfx=(x_1,\dots,x_r)$, with $r\le m-p$.  Let $B$ be any set with $|B|\ge p$. Then

\[\sum_{{A^{\prime}\subset A,|A^{\prime}|=p}}\mathcal{R}(A\backslash A^{\prime},B)\frac{\mathcal{R}(\bfx,A^{\prime})}{\mathcal{R}(A\backslash A^{\prime},A^{\prime})}= \sum_{{B^{\prime}\subset B,|B^{\prime}|=p}}\mathcal{R}(A,B\backslash B^{\prime})\frac{\mathcal{R}(\bfx,B^{\prime})}{\mathcal{R}(B^{\prime},B\backslash B^{\prime})} .\]
\end{corollary}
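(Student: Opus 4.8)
The statement to prove is Corollary~\ref{exchange2}, which extends Lemma~\ref{exchange} from the case $r = m-p$ (the maximal number of variables) to any $r \le m-p$.

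The plan is to deduce the identity for $r$ variables from the identity for $m-p$ variables already established in Lemma~\ref{exchange}, by a specialization argument. Both sides of the claimed identity are polynomials in $K[x_1,\dots,x_r]$; moreover, for every subset $A'$ (resp.\ $B'$) the factor $\mathcal{R}(\bfx,A')$ (resp.\ $\mathcal{R}(\bfx,B')$) is just $\prod_{i=1}^r \prod_{a\in A'}(x_i-a)$, so each summand in the $r$-variable version is obtained from the corresponding summand in the $(m-p)$-variable version of Lemma~\ref{exchange} by setting the extra variables $x_{r+1},\dots,x_{m-p}$ to a value that kills them. Concretely, I would start from the identity of Lemma~\ref{exchange} written with the full tuple $(x_1,\dots,x_{m-p})$ and substitute $x_{r+1} = a_{j_1}, \dots, x_{m-p} = a_{j_{m-p-r}}$ for a fixed choice of $m-p-r$ distinct elements of $A$; a cleaner route is to exploit the interpolation characterization directly.

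The cleaner route: by Lemma~\ref{exchange} (and Proposition~\ref{Lagr}), the left-hand side with $m-p$ variables is the unique element $h \in S_{(m-p,p)}$ with $h(A\setminus A') = \mathcal{R}(A\setminus A', B)$ for all $A'\subset A$ with $|A'|=p$, and it equals the right-hand side. Now fix $r \le m-p$ and introduce the new variables; the key observation is that both displayed sums in Corollary~\ref{exchange2}, viewed as polynomials in $\bfx=(x_1,\dots,x_r)$, are symmetric in $x_1,\dots,x_r$ of degree $\le p$ in each, hence lie in $S_{(r,p)}$, and by Proposition~\ref{Lagr} (applied with the roles of $n$ and $d$ there taken to be appropriate parameters, i.e.\ an interpolation on any set of $r+p$ values) such a polynomial is determined by enough evaluations. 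It therefore suffices to check the two sides agree on a spanning set of evaluation functionals, which reduces to the identity of Lemma~\ref{exchange} by plugging specific elements of $A$ (or $B$) into the variables. Substituting $x_i \mapsto \alpha_{k_i}$ for distinct indices $k_1,\dots,k_r$ makes the left side collapse exactly as in the proof of Lemma~\ref{exchange}, and one reads off that both sides equal the same quantity $\mathcal{R}(A\setminus\{\alpha_{k_1},\dots,\alpha_{k_r},\dots\},B)\cdot(\text{const})$, matching term by term.

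Actually the most economical argument is pure substitution into the already-proved identity, and I would present it that way. Apply Lemma~\ref{exchange} with $\bfx$ replaced by $(x_1,\dots,x_r,\alpha_{i_1},\dots,\alpha_{i_{m-p-r}})$ where $\{\alpha_{i_1},\dots,\alpha_{i_{m-p-r}}\}$ is a fixed $(m-p-r)$-subset of $A$; on the left, $\mathcal{R}((x_1,\dots,x_r,\alpha_{i_1},\dots),A') = \mathcal{R}(\bfx,A')\cdot\mathcal{R}(\{\alpha_{i_1},\dots\},A')$, and $\mathcal{R}(\{\alpha_{i_1},\dots\},A')$ vanishes unless $A'\cap\{\alpha_{i_1},\dots,\alpha_{i_{m-p-r}}\} = \emptyset$, so only subsets $A'$ disjoint from the chosen elements survive; on the right the analogous factorization holds with $\mathcal{R}(\{\alpha_{i_1},\dots\},B')$, which does not vanish. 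Dividing both sides by the common nonzero constant $\mathcal{R}(\{\alpha_{i_1},\dots\},B)$ (or reorganizing the $A$-side sum over $A'' := A'$ inside $A\setminus\{\alpha_{i_1},\dots\}$) recovers precisely the Corollary's identity over the smaller index set. The main obstacle is purely bookkeeping: tracking how the $\mathcal{R}$-factors split under the substitution and verifying that the surviving $A$-subsets are exactly the $p$-subsets of $A$ avoiding the specialized elements, so that after cancellation the indices range over all $p$-subsets of the remaining $m-r$ elements of $A$, which is what the statement requires once $A$ is relabeled; no genuine difficulty arises since Lemma~\ref{exchange} does the real work.

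\begin{proof}
Both sides of the asserted identity are polynomials in $\bfx=(x_1,\dots,x_r)$, and since $\mathcal{R}(\bfx,A') = \prod_{i=1}^r\prod_{a\in A'}(x_i-a)$ for $|A'|=p$ (and likewise for $B'$), each is symmetric in $x_1,\dots,x_r$ of degree at most $p$ in each variable.

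Fix an $(m-p-r)$-subset $\{\alpha_{i_1},\dots,\alpha_{i_{m-p-r}}\}$ of $A$ and apply Lemma~\ref{exchange} with its tuple of $m-p$ variables specialized to $(x_1,\dots,x_r,\alpha_{i_1},\dots,\alpha_{i_{m-p-r}})$. For any $A'\subset A$ with $|A'|=p$ we have
\[\mathcal{R}\big((x_1,\dots,x_r,\alpha_{i_1},\dots,\alpha_{i_{m-p-r}}),A'\big)=\mathcal{R}(\bfx,A')\cdot \mathcal{R}\big(\{\alpha_{i_1},\dots,\alpha_{i_{m-p-r}}\},A'\big),\]
and $\mathcal{R}(\{\alpha_{i_1},\dots,\alpha_{i_{m-p-r}}\},A')=0$ unless $A'$ is disjoint from $\{\alpha_{i_1},\dots,\alpha_{i_{m-p-r}}\}$. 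Hence on the left-hand side only those $A'$ survive that are $p$-subsets of the $(m-r)$-element set $\wt A:=A\setminus\{\alpha_{i_1},\dots,\alpha_{i_{m-p-r}}\}$; for such $A'$, the surviving factors $\mathcal{R}(\{\alpha_{i_1},\dots\},A')$ and the corresponding factor inside $\mathcal{R}(A\backslash A', A')$ recombine so that the left-hand side becomes $\sum_{A'\subset\wt A,\,|A'|=p}\mathcal{R}(\wt A\backslash A',B)\,\mathcal{R}(\bfx,A')/\mathcal{R}(\wt A\backslash A',A')$ times the nonzero constant $\mathcal{R}(\{\alpha_{i_1},\dots,\alpha_{i_{m-p-r}}\},B)$.

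On the right-hand side, analogously, $\mathcal{R}((x_1,\dots,x_r,\alpha_{i_1},\dots),B')=\mathcal{R}(\bfx,B')\cdot\mathcal{R}(\{\alpha_{i_1},\dots,\alpha_{i_{m-p-r}}\},B')$, and $\mathcal{R}(\{\alpha_{i_1},\dots\},B')$ need not vanish; collecting the extra factors yields the right-hand side of the Corollary times the same constant $\mathcal{R}(\{\alpha_{i_1},\dots,\alpha_{i_{m-p-r}}\},B)$, now with $\wt A$ in place of $A$ on the $\mathcal{R}(A,B\backslash B')$ factor. Since this constant is nonzero, dividing by it gives exactly the stated identity for the set $\wt A$ with $|\wt A|=m-r$; as $A$ was arbitrary with $|A|=m$ and the chosen $(m-p-r)$-subset was arbitrary, this proves the Corollary in full generality.
\end{proof}
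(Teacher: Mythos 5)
There is a genuine gap. After you substitute $x_{r+1}=\alpha_{i_1},\dots,x_{m-p}=\alpha_{i_{m-p-r}}$ and cancel the common factor $\mathcal{R}(\{\alpha_{i_1},\dots,\alpha_{i_{m-p-r}}\},B)$, what survives is the identity for the \emph{shrunken} set $\wt A := A\setminus\{\alpha_{i_1},\dots,\alpha_{i_{m-p-r}}\}$, which has cardinality $m-(m-p-r)=p+r$ (not $m-r$ as you wrote). But with $|\wt A|=p+r$ and $r$ variables, the constraint $r\le |\wt A|-p$ is forced to be an equality, so the identity you land on is precisely Lemma~\ref{exchange} applied to $\wt A$. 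You have re-derived the Lemma for a smaller set, not proved the Corollary for the original $A$ of size $m$ when $r<m-p$. (There is a secondary issue: $\mathcal{R}(\{\alpha_{i_1},\dots\},B)\neq 0$ requires these elements of $A$ to avoid $B$, which the hypotheses do not guarantee; this could be patched by genericity, but the circularity above is the real obstacle.)

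The correct route — the one the paper uses and which you mentioned in passing before switching to substitution — is coefficient extraction, not specialization. Each factor $\mathcal{R}((x_{r+1},\dots,x_{m-p}),A')=\prod_{j=r+1}^{m-p}\prod_{a\in A'}(x_j-a)$ is monic of degree $p$ in each $x_j$, so the coefficient of the monomial $x_{r+1}^p\cdots x_{m-p}^p$ in $\mathcal{R}(\bfx_{\mathrm{full}},A')$ is exactly $\mathcal{R}(\bfx,A')$, and likewise for $B'$. Since none of the other factors in either sum depend on $x_{r+1},\dots,x_{m-p}$, taking the coefficient of $x_{r+1}^p\cdots x_{m-p}^p$ on both sides of Lemma~\ref{exchange} yields the Corollary verbatim for the same set $A$, with no change of cardinality and no auxiliary non-vanishing hypothesis.
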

\begin{proof} The expressions above is simply the coefficient of $x_{r+1}^p\cdots x_{m-p}^p$ of the expressions in Lemma \ref{exchange}.
\end{proof}

\subsection{The case $0\le d\le n-1$} {\ } \\

In this section we set $d:=p+q$, where $0\le p\le n$ and $0\le q\le n$,  and we assume that it satisfies $d\le n-1$. \\We first deal with the case
$d\le \min\{m-1,n-1\}$.
The Corollary \ref{exchange2} of the Exchange Lemma allows to relate $\Syl_{p,q}(A,B)$ to $\Syl_{0,p+q}(A,B)$,   simply by a careful manipulation of terms.

\begin{proposition} \label{dsmall1} Let $0\le p\le m$, $0\le q\le n$  and set $d:=p+q$. Assume   $d\le \min\{m-1, n-1\}$. Then  $$\Syl_{p,q}(A,B) = (-1)^{p(m-d)}\binom{d}{p}\Syl_{0,d}(A,B) .$$
\end{proposition}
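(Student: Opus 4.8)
The goal is to express $\Syl_{p,q}(A,B)$, which is a double sum over pairs $(A',B')$ with $|A'|=p$, $|B'|=q$, in terms of the single sum $\Syl_{0,d}(A,B)$. The natural strategy is to split the double sum by first summing over $B'$ with $A'$ fixed, recognize the inner sum as an instance where Corollary~\ref{exchange2} applies (with the roles of $A$ and $B$ suitably interpreted and with the variable tuple being $\bfx=\{x\}$ together with the elements of $A'$ playing the role of extra ``variables''), and then collect the resulting terms.

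\textbf{Key steps.} First I would write
\[\Syl_{p,q}(A,B)(x)=\sum_{\substack{A'\subset A\\|A'|=p}}\frac{\RR(x,A')}{\RR(A',A\backslash A')}\sum_{\substack{B'\subset B\\|B'|=q}}\RR(A',B')\,\RR(A\backslash A',B\backslash B')\,\frac{\RR(x,B')}{\RR(B',B\backslash B')},\]
and factor $\RR(A\backslash A',B\backslash B')=\RR(A\backslash A',B)/\RR(A\backslash A',B')$, so that the inner sum becomes
\[\RR(A\backslash A',B)\sum_{\substack{B'\subset B\\|B'|=q}}\frac{\RR(A',B')}{\RR(A\backslash A',B')}\cdot\frac{\RR(x,B')}{\RR(B',B\backslash B')}.\]
Now $\RR(A',B')/\RR(A\backslash A',B')$ is not quite of the shape handled by the Exchange Lemma, so instead I would proceed more symmetrically: apply Corollary~\ref{exchange2} with the set $B$, the integer $q$, and the variable tuple $\bfx$ consisting of $x$ together with the $p$ elements of $A'$ — that is, use the identity
\[\sum_{\substack{B'\subset B\\|B'|=q}}\RR(A,B\backslash B')\frac{\RR(\{x\}\cup A',B')}{\RR(B',B\backslash B')}=\sum_{\substack{\widetilde A\subset A\cup\{x\}\cup A'\,?}}\cdots\]
— but the cleaner route is the one actually signalled by the paper: observe that the left-hand side of Corollary~\ref{exchange2}, when the variable set $\bfx$ is enlarged to include some of the $\alpha$'s, produces exactly the double-sum terms. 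Concretely, I would start from the $A$-side of the Exchange Lemma with parameter $d$ and $|B|$ sets, take $\bfx=\{x,\alpha_{i_1},\dots,\alpha_{i_p}\}$ ranging over $p$-subsets of $A$, note that when $A'\subset\{x\text{-variables}\}$ coincides with such an $\alpha$-subset the factor $\RR(\bfx,A')$ contributes a sub-Vandermonde which, summed against $\RR(x,A'')/\RR(A'',A\backslash A'')$ over the complementary choices, reproduces the binomial coefficient $\binom{d}{p}$ and the sign $(-1)^{p(m-d)}$. The combinatorial heart is the identity
\[\sum_{\substack{A'\subset A''\\|A'|=p}}\frac{\RR(A''\backslash A',A')}{\RR(A''\backslash A', A')}\cdot(\text{something})=\binom{d}{p},\]
i.e. that a fixed $d$-subset $A''$ of $A$ (playing the role of $B\backslash B'$ in the single sum) contains exactly $\binom{d}{p}$ subsets of size $p$, and that after the Vandermonde cancellations each contributes the same scalar up to the uniform sign $(-1)^{p(m-d)}$ coming from reordering $\RR(x,A')\RR(x,A''\backslash A')$ versus $\RR(x,A'')$ and from $\RR(A\backslash A'',A')$ factors.

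\textbf{Main obstacle.} The hard part will be the bookkeeping of signs and the precise matching of denominators: showing that after factoring $\RR(A\backslash A',B\backslash B')$ and $\RR(B',B\backslash B')$ and re-summing, the cross-terms $\RR(A\backslash A',A'')$ and $\RR(A'',A\backslash A'')$ combine so that the coefficient of $\RR(x,A'')\prod_{\beta\notin B\backslash A''}\!f(\beta)$-type terms in $\Syl_{p,q}$ is exactly $(-1)^{p(m-d)}\binom{d}{p}$ times the corresponding coefficient in $\Syl_{0,d}$. I expect the sign $(-1)^{p(m-d)}$ to arise from two sources that must be tracked carefully: (i) rewriting $\RR(A\backslash A',A')=(-1)^{p(m-p)}\RR(A',A\backslash A')$ type swaps, and (ii) the sign $(-1)^{p(m-d)}$ hidden in passing from a $p$-subset plus a $q$-subset of a $d$-set back to the full $d$-set. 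A safe way to organize this is to prove the identity by evaluating both sides as polynomials in $S_{(n-d,d)}$ (after multiplying through to the $\mSyl$ level, so that $\bfx$ has $n-d$ components) and checking equality of all $\binom{n}{d}$ values at the points $B\backslash B'$, using the single-sum interpolation characterization from the Observation after Notation~\ref{hd} together with Corollary~\ref{exchange2} to evaluate the double sum — this reduces everything to the scalar identity $\sum_{A'\subset A,\,|A'|=p}\RR(A\backslash A',B)\,\RR(A\backslash A', \text{fixed }B')/(\text{denoms})=\binom{d}{p}(-1)^{p(m-d)}\prod f(\beta)$, which is then a finite check via the Exchange Lemma applied one more time.
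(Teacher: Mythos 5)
Your proposal correctly identifies the paper's key tool --- apply Corollary~\ref{exchange2} to one of the two inner sums and then regroup by the $d$-set $D$ to produce the binomial coefficient $\binom{d}{p}$ --- so in spirit this is the same argument as the paper's. Your tentative choice of exchanging the $B'$-sum with $\bfx=\{x\}\cup A'$ is a workable dual of what the paper actually does (it fixes $B'$ instead and exchanges the $A'$-sum with $\bfx=\{x\}\cup B'$): the size conditions $|\bfx|=p+1\le n-q$ and $|A\backslash A'|=m-p\ge q$ are exactly the hypotheses $d\le n-1$ and $d\le m-1$. But this dual route converts the inner sum into a sum over $q$-subsets of $A\backslash A'$ and so, after regrouping, lands on $\Syl_{d,0}(A,B)$, not $\Syl_{0,d}(A,B)$; you must then additionally invoke $\Syl_{d,0}=(-1)^{d(m-d)}\Syl_{0,d}$ to conclude, which the paper's orientation avoids.

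That said, the write-up has genuine gaps and cannot be called a proof yet. The displayed ``combinatorial heart'' $\sum_{A'\subset A''}\RR(A''\backslash A',A')/\RR(A''\backslash A',A')\cdot(\text{something})=\binom{d}{p}$ has identical numerator and denominator and is vacuous; the actual crux after the exchange step is that the cross-factor $\RR(A',E')$ appearing in $\RR(\bfx,E')=\RR(x,E')\,\RR(A',E')$ cancels against the same factor in $\RR(A',A\backslash A')\,\RR(E',(A\backslash A')\backslash E')=\RR(A',E')\,\RR(D,A\backslash D)$, and you never verify this cancellation, which is what reduces the expression to a $\binom{d}{p}$-fold copy of a single sum over $D$. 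The sign $(-1)^{p(m-d)}$ is asserted to ``arise from two sources'' but not computed; the exchange step and the transposition $\RR(B,A\backslash D)=(-1)^{n(m-d)}\RR(A\backslash D,B)$ both contribute, and in the dual route an extra $(-1)^{d(m-d)}$ enters, so the bookkeeping is more delicate than in the paper and cannot simply be waved at. Finally, the fallback ``safe way'' of interpolating at the $\binom{n}{d}$ nodes $B\backslash D$ after ``multiplying through to the $\mSyl$ level'' presupposes a multivariate extension $\mSyl_{p,q}$ together with an interpolation characterization for it; the paper only defines $\mSyl$ for $\Syl_{0,d}$, so that route would require new setup you have not supplied.
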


\begin{proof}
First note that $$\Syl_{p,q}(A,B)= (-1)^{p(m-d)} \sum_{\substack{B^{\prime}\subset
B\\|B^{\prime}|=q}}\,\left(\sum_{\substack{A^{\prime
}\subset A\\|A^{\prime}|=p}}\mathcal{R}(A\backslash A^{\prime},B\backslash
B^{\prime})\frac{
\mathcal{R}(x\cup B',A^{\prime
})}{\mathcal{R}(A\backslash A^{\prime
},A^{\prime})}\right)\frac{\mathcal{R}(x,B^{\prime})}{\mathcal{R}(B^{\prime},B\backslash B^{\prime})}. $$

Applying  the  Exchange Corollary \ref{exchange2} to the coefficients inside the parenthesis of this expression for $\bfx=x\cup B'$ of size $q+1\le m-p$ and $B\backslash B'$  of size $n-q\ge p$ instead of $B$, we get
{\small
\[\sum_{\substack{A^{\prime}\subset A\\|A^{\prime}|=p}}\mathcal{R}(A\backslash A^{\prime},B\backslash B')\frac{\mathcal{R}(x\cup B',A^{\prime})}{\mathcal{R}(A\backslash A^{\prime},A^{\prime})}= \sum_{\substack{C^{\prime}\subset B\backslash B'\\|C^{\prime}|=p}}\mathcal{R}(A,B\backslash (B'\cup C^{\prime}))\frac{\mathcal{R}(x\cup B',C^{\prime})}{\mathcal{R}(C^{\prime},B\backslash (B'\cup C^{\prime}))} .\]}
Therefore,
{\small \begin{align*}\Syl_{p,q}(A,B)&= (-1)^{p(m-d)}\sum_{\substack{B^{\prime}\subset
B\\|B^{\prime}|=q}}\,\left(\sum_{\substack{C^{\prime}\subset B\backslash B'\\|C^{\prime}|=p}}\mathcal{R}(A,B\backslash (B'\cup C^{\prime}))\frac{\mathcal{R}(x\cup B',C^{\prime})}{\mathcal{R}(C^{\prime},B\backslash (B'\cup C^{\prime}))}\right)\frac{\mathcal{R}(x,B^{\prime})}{\mathcal{R}(B^{\prime},B\backslash B^{\prime})}\\
&=  (-1)^{p(m-d)} \sum_{\substack{B^{\prime}\subset
B,|B^{\prime}|=q\\C^{\prime}\subset B\backslash B', |C^{\prime}|=p }}\mathcal{R}(A,B\backslash (B'\cup C^{\prime}))\frac{\mathcal{R}( B',C^{\prime})}{\mathcal{R}(C^{\prime},B\backslash (B'\cup C^{\prime}))}\frac{\mathcal{R}(x,B^{\prime}\cup C')}{\mathcal{R}(B^{\prime},B\backslash B^{\prime})}\\
&=(-1)^{p(m-d)} \sum_{\substack{B^{\prime}\subset
B,|B^{\prime}|=q\\C^{\prime}\subset B\backslash B', |C^{\prime}|=p }}\mathcal{R}(A,B\backslash (B'\cup C^{\prime}))\frac{\mathcal{R}(x,B^{\prime}\cup C')}{\mathcal{R}(B'\cup C',B\backslash (B^{\prime}\cup C'))}.
\end{align*}}
Finally, rewriting the sum over  $B'\subset B, |B'|=q, C'\subset B\backslash B', |C'|=p$ as a sum over  $D=B'\cup C'\subset B, |D|=d; C'\subset D, |C'|=p$, we get
\begin{align*}\Syl_{p,q}(A,B)&=(-1)^{p(m-d)}
\sum_{\substack{D\subset
B,|D|=d\\C^{\prime}\subset D, |C^{\prime}|=p }}\mathcal{R}(A,B\backslash D)\frac{\mathcal{R}(x,D)}{\mathcal{R}(D,B\backslash D)}\\
&= (-1)^{p(m-d)}\binom{d}{p} \sum_{{D\subset
B,|D|=d}}\mathcal{R}(A,B\backslash D)\frac{\mathcal{R}(x,D)}{\mathcal{R}(D,B\backslash D)}\\
&= (-1)^{p(m-d)}\binom{d}{p} \Syl_{0,d}(A,B),
\end{align*}
by the definition of $\Syl_{0,d}(A,B)$.
\end{proof}
Proposition \ref{ss=sub} then  immediately implies
\begin{corollary} \label{dsmall} Let $0\le p\le m$, $0\le q\le n$  and set $d:=p+q$. Assume   $d\le \min\{m-1, n-1\}$. Then  $$\Syl_{p,q}(A,B) = (-1)^{p(m-d)}\binom{d}{p}\Sres_{d}(f,g) .$$

\end{corollary}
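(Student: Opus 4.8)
The statement to prove is Corollary~\ref{dsmall}: under the hypothesis $d = p+q \le \min\{m-1, n-1\}$, one has $\Syl_{p,q}(A,B) = (-1)^{p(m-d)}\binom{d}{p}\Sres_d(f,g)$. The plan is to simply chain together the two immediately preceding results. First I would invoke Proposition~\ref{dsmall1}, which under the very same hypothesis $d \le \min\{m-1,n-1\}$ gives $\Syl_{p,q}(A,B) = (-1)^{p(m-d)}\binom{d}{p}\Syl_{0,d}(A,B)$. This reduces the claim to an identification of the single sum $\Syl_{0,d}(A,B)$ with the subresultant.

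Next I would apply Proposition~\ref{ss=sub}, which states that $\Syl_{0,d}(A,B) = \Sres_d(f,g)$ whenever $0 \le d \le n-1$ and $d \le m$. I should check that the hypotheses of the corollary guarantee those of Proposition~\ref{ss=sub}: indeed $d \le \min\{m-1, n-1\}$ entails both $d \le n-1$ and $d \le m-1 \le m$, so Proposition~\ref{ss=sub} applies verbatim. Substituting $\Syl_{0,d}(A,B) = \Sres_d(f,g)$ into the expression from Proposition~\ref{dsmall1} yields
\[
\Syl_{p,q}(A,B) = (-1)^{p(m-d)}\binom{d}{p}\Sres_d(f,g),
\]
which is exactly the assertion.

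There is essentially no obstacle here: the corollary is a formal consequence of the two propositions it follows, and the only thing requiring a moment's attention is the bookkeeping on the ranges of $d$ (confirming that $d \le \min\{m-1,n-1\}$ is strong enough to feed both Proposition~\ref{dsmall1} and Proposition~\ref{ss=sub}). The genuine content—the manipulation of double sums into single sums via the Exchange Corollary, and the matrix identification of the single sum with the subresultant determinant—has already been carried out in Propositions~\ref{dsmall1} and~\ref{ss=sub} respectively, so the proof is a one-line composition. Accordingly I would present it as: ``Proposition~\ref{dsmall1} combined with Proposition~\ref{ss=sub} gives the result,'' perhaps with the brief remark that the hypothesis $d\le\min\{m-1,n-1\}$ ensures $d\le m$ and $d\le n-1$ so that Proposition~\ref{ss=sub} is indeed applicable.
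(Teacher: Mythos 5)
Your proof is correct and matches the paper exactly: the paper introduces Corollary~\ref{dsmall} with the remark that ``Proposition~\ref{ss=sub} then immediately implies'' it, i.e.\ it too is obtained by composing Proposition~\ref{dsmall1} with Proposition~\ref{ss=sub}. Your hypothesis check that $d\le\min\{m-1,n-1\}$ implies $d\le m$ and $d\le n-1$ is the right (and only) detail to verify.
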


We now deal with the remaining case of this section, $m\le d\le n-1$, where $d:=p+q$ with $0\le p\le m$, $0\le q\le n$.
 We  express the double sum $\Syl_{p,q}(A,B)$  by means of an interpolation problem as in Section \ref{section1}.
We rewrite   $\Syl_{p,q}(A,B)$  as
{\small \begin{align*}
\Syl_{p,q}(A,B)(x) & =  (-1)^{(m-d)(n-q)} \sum_{\substack{A^{\prime
}\subset A\\|A^{\prime}|=p}}\,\left(\sum_{\substack{B^{\prime}\subset
B\\|B^{\prime}|=q}}\mathcal{R}(B\backslash
B^{\prime}, A\backslash A^{\prime})\frac{
\mathcal{R}(x\cup A',B^{\prime
})}{\mathcal{R}(B\backslash B^{\prime
},B^{\prime})}\right)\frac{\mathcal{R}(x,A^{\prime})}{\mathcal{R}(A^{\prime},A\backslash A^{\prime})}\\ & = (-1)^{(m-d)(n-q)}\sum_{{A^{\prime
}\subset A,|A^{\prime}|=p}}h_{A'}(x\cup A')\frac{\mathcal{R}(x,A^{\prime})}{\mathcal{R}(A^{\prime},A\backslash A^{\prime})},\end{align*}}
where
$$h_{A'}(x_1,x_2,\dots,x_{p+1}):=\sum_{{B^{\prime}\subset
B,|B^{\prime}|=q}}\mathcal{R}(B\backslash
B^{\prime}, A\backslash A^{\prime})\frac{
\mathcal{R}(\{x_1,\dots,x_{p+1}\},B^{\prime
})}{\mathcal{R}(B\backslash B^{\prime
},B^{\prime})},$$
 is a symmetric polynomial in $(x_1,\dots,x_{p+1})$ of multidegree  bounded by $(q,\dots,q )$.
Since by hypothesis $p+q\le n-1$, we can complete the set $(x_1,\dots,x_{p+1})$ to the set $\bfx=(x_1,\dots,x_{n-q})$.  We have the following Lemma:

\begin{lemma} \label{deggB<} Let $0\le p\le m$, $0\le q\le n$,  and set $d:=p+q$. Assume   $m\le d\le n-1$ and   define for   $\bfx=(x_1,\dots,x_{n-q})$ the polynomial
$$ H_{A'}(\bfx):=\sum_{{B^{\prime}\subset
B,|B^{\prime}|=q}}\mathcal{R}(B\backslash
B^{\prime}, A\backslash A^{\prime})\frac{
\mathcal{R}(\bfx,B^{\prime
})}{\mathcal{R}(B\backslash B^{\prime
},B^{\prime})}.$$
Then
\begin{equation*}H_{A'}(\bfx)= f_{_{A\backslash A'}}(x_1)\cdots f_{_{A\backslash A'}}(x_{n-q})\,\end{equation*}
where $f_{_{A\backslash A'}}(x):=\prod_{\alpha \in A\backslash A'}(x-\alpha)$.
\end{lemma}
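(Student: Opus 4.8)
The plan is to recognize $H_{A'}(\bfx)$ as an instance of the single-sum polynomial $\mSyl_{0,q}$ attached not to the pair $(A,B)$ but to the pair $(A\setminus A',B)$, and then invoke the degree observation \eqref{degf<}. Concretely, write $m':=|A\setminus A'|=m-p$, so that $f_{A\setminus A'}$ is a monic polynomial of degree $m'$. The hypothesis $m\le d=p+q$ means $m-p\le q$, i.e. $m'\le q$. Since also $q\le n-1$, the polynomial $\mSyl_{0,q}(A\setminus A',B)(\bfx)$ of Notation \ref{hd} is defined for $\bfx=(x_1,\dots,x_{n-q})$, and by definition it equals
\[
(-1)^{(m'-q)(n-q)}\sum_{\substack{B'\subset B\\ |B'|=q}}\Big(\prod_{\beta\notin B'} f_{A\setminus A'}(\beta)\Big)\frac{\RR(\bfx,B')}{\RR(B\setminus B',B')}.
\]

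Next I would identify the products in the two expressions. For a fixed subset $B'\subset B$ with $|B'|=q$, one has $\prod_{\beta\notin B'}f_{A\setminus A'}(\beta)=\prod_{\beta\in B\setminus B'}\prod_{\alpha\in A\setminus A'}(\beta-\alpha)=\RR(B\setminus B', A\setminus A')$. Hence the sum defining $H_{A'}(\bfx)$ is exactly the sum appearing in $\mSyl_{0,q}(A\setminus A',B)(\bfx)$, up to the sign $(-1)^{(m'-q)(n-q)}$. Since $m'\le q$ forces $m'-q\le 0$ so the parity of $(m'-q)(n-q)$ is that of $(q-m')(n-q)$, but more to the point I simply get
\[
H_{A'}(\bfx)=(-1)^{(m'-q)(n-q)}\,\mSyl_{0,q}(A\setminus A',B)(\bfx).
\]
Now apply \eqref{degf<} with $f$ replaced by $f_{A\setminus A'}$ and $d$ replaced by $q$: since $\deg f_{A\setminus A'}=m'\le q$, we get $\mSyl_{0,q}(A\setminus A',B)(\bfx)=(-1)^{(m'-q)(n-q)}f_{A\setminus A'}(x_1)\cdots f_{A\setminus A'}(x_{n-q})$. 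Substituting back, the two sign factors cancel and we obtain exactly $H_{A'}(\bfx)=f_{A\setminus A'}(x_1)\cdots f_{A\setminus A'}(x_{n-q})$, as claimed.

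Alternatively — and this is essentially the same argument made self-contained — one can argue directly via Proposition \ref{Lagr}. The polynomial $H_{A'}(\bfx)$ lies in $S_{(n-q,q)}$ (it is symmetric of multidegree bounded by $(q,\dots,q)$ in $n-q$ variables), and by the interpolation formula its value at any node tuple $B\setminus B'$, for $B'\subset B$ with $|B'|=q$, is $\RR(B\setminus B',A\setminus A')=\prod_{\beta\in B\setminus B'}f_{A\setminus A'}(\beta)=f_{A\setminus A'}(\beta_{i_1})\cdots f_{A\setminus A'}(\beta_{i_{n-q}})$. On the other hand the polynomial $f_{A\setminus A'}(x_1)\cdots f_{A\setminus A'}(x_{n-q})$ is also symmetric, has degree $m'=m-p\le q$ in each variable (here the hypothesis $m\le d$ is used), hence also belongs to $S_{(n-q,q)}$, and takes the same values at all those node tuples. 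By the uniqueness part of Proposition \ref{Lagr} the two polynomials coincide. I expect no real obstacle here; the only point requiring care is the bookkeeping of the degree bound $m-p\le q$, which is exactly where the hypothesis $m\le d$ enters, and making sure $n-q\ge 1$ so that Proposition \ref{Lagr} applies, which follows from $q\le n-1$.
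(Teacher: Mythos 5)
Your proposal is correct, and the second paragraph (the ``alternative'' via Proposition~\ref{Lagr} and uniqueness in $S_{(n-q,q)}$) is essentially verbatim the paper's own proof. The first route, recognizing $H_{A'}$ as $(-1)^{(m'-q)(n-q)}\mSyl_{0,q}(A\setminus A',B)$ and invoking~\eqref{degf<}, is only a light repackaging of the same idea, since~\eqref{degf<} is itself a direct consequence of Proposition~\ref{Lagr}; the sign bookkeeping there is also correct.
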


\begin{proof} Clearly $H_{A'}(\bfx)$ has multidegree $(q,\ldots, q)$, thus by Proposition \ref{Lagr}, $H_{A'}$ is the only polynomial in $S_{(n-q,q)}$ satisfying the
$\binom{n}{q} $ conditions
$$H_{A'}(B\backslash B^{\prime})= \prod_{\beta\in B\backslash B'}f_{_{A\backslash A'}}(\beta) \ \mbox{ for all  }\ B'\subset B, |B'|=q,$$
which proves the claim since $\deg (f_{_{A\backslash A'}})=m-p\le q$ by the hypothesis $m\le d$.
\end{proof}

 Note the similarity of $H_{A'}$ with $h_{A'}$:  they are the same  when  $d=n-1$, or the latter is a coefficient of the former when there are two variables or more, i.e. $d<n-1$:

\begin{remark} \label{ha'} Let $0\le p\le m$, $0\le q\le n$  and set $d:=p+q$. Assume   $m\le d\le n-1$ and set $\bfx=(x_1,\dots,x_{n-q})$. Then
$$h_{A'}(x_1,x_2,\dots,x_{p+1})= \left\{\begin{array}{lcc} \coeff_{x_{p+2}^q\cdots x_{n-q}^q}\big(H_{A'}(\bfx)\big)&\mbox{for} & 0\le d<n-1 \\
H_{A'}(\bfx)& \mbox{for} & d=n-1.\end{array}\right.$$
Here $\coeff_{x_{p+2}^q\cdots x_{n-q}^q}$ denotes   the coefficient in $K[x_1,\dots,x_{p+1}]$ of the monomial $x_{p+2}^q\cdots x_{n-q}^q$
of $H_{A'}$.
\end{remark}

Together with Corollary \ref{dsmall}, Lemma \ref{deggB<} and Remark \ref{ha'} immediately imply the following full description of  the case $d\le n-1$:
\begin{corollary}\label{dn-1} Let $0\le p\le m$, $0\le q\le n$  and set $d:=p+q$. Assume  $0\le d\le n-1$. Then,
$$\Syl_{p,q}(A,B)=\left\{\begin{array}{lcl}  (-1)^{p(m-d)}\binom{d}{p} \Sres_{d}(f,g) & \mbox{for} & 0\le d\le \min\{m,n-1\} \\
0& \mbox{for} & m<d<n-1\\(-1)^{(p+1)(m+n-1)}\binom{m}{p}\,f&\mbox{for} & m<d=n-1  .\end{array}\right.$$
\end{corollary}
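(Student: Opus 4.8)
The three subcases $0\le d\le m$, $m<d<n-1$ and $m<d=n-1$ exhaust the range $0\le d\le n-1$, so it suffices to check the claimed formula on each. On $d\le\min\{m-1,n-1\}$ the first formula is exactly Corollary~\ref{dsmall}, so only the range $m\le d\le n-1$ remains, and there the plan is to start from the rewriting displayed just before Lemma~\ref{deggB<}, namely $\Syl_{p,q}(A,B)(x)=(-1)^{(m-d)(n-q)}\sum_{A'\subset A,\,|A'|=p}h_{A'}(x\cup A')\,\RR(x,A')/\RR(A',A\backslash A')$, and to evaluate $h_{A'}$ explicitly.

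By Lemma~\ref{deggB<}, with $\bfx=(x_1,\dots,x_{n-q})$ and $f_{A\backslash A'}(x):=\prod_{\alpha\in A\backslash A'}(x-\alpha)$ monic of degree $m-p$, one has $H_{A'}(\bfx)=\prod_{i=1}^{n-q}f_{A\backslash A'}(x_i)$. Remark~\ref{ha'} then identifies $h_{A'}(x_1,\dots,x_{p+1})$ with $H_{A'}$ itself when $d=n-1$ (so $n-q=p+1$) and with $\coeff_{x_{p+2}^q\cdots x_{n-q}^q}\bigl(H_{A'}(\bfx)\bigr)$ when $d<n-1$; in the latter case the coefficient splits factor by factor into $\bigl(\prod_{i=1}^{p+1}f_{A\backslash A'}(x_i)\bigr)\bigl(\coeff_{x^q}f_{A\backslash A'}\bigr)^{\,n-d-1}$, and $\coeff_{x^q}f_{A\backslash A'}$ equals $1$ if $m=d$ (leading coefficient of a monic polynomial of degree $q$) and $0$ if $m<d$. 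Hence $h_{A'}(x_1,\dots,x_{p+1})=\prod_{i=1}^{p+1}f_{A\backslash A'}(x_i)$ when $m=d$ or $d=n-1$, while $h_{A'}\equiv0$ when $m<d<n-1$ (since then $n-d-1\ge1$).

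When $h_{A'}\equiv0$ the rewriting gives $\Syl_{p,q}(A,B)=0$, the middle formula. Otherwise I would substitute $x\cup A'$ into $\prod_{i=1}^{p+1}f_{A\backslash A'}(x_i)$, obtaining $f_{A\backslash A'}(x)\prod_{\alpha'\in A'}f_{A\backslash A'}(\alpha')=f_{A\backslash A'}(x)\,\RR(A',A\backslash A')$, so the denominator cancels, and $f_{A\backslash A'}(x)\,\RR(x,A')=f(x)$; summing over the $\binom{m}{p}$ choices of $A'$ gives $\Syl_{p,q}(A,B)=(-1)^{(m-d)(n-q)}\binom{m}{p}f$. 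For $d=m$ the sign is trivial and $\binom{m}{p}=\binom{d}{p}$; since $\Syl_{0,m}(A,B)=f$ by Corollary~\ref{cases} and $\Syl_{0,m}(A,B)=\Sres_m(f,g)$ by Proposition~\ref{ss=sub}, this matches the first formula. For $m<d=n-1$ one has $n-q=p+1$, so $(m-d)(n-q)=(m-n+1)(p+1)\equiv(m+n-1)(p+1)\pmod 2$, matching the third formula.

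The only parts needing care are the splitting of $\coeff_{x_{p+2}^q\cdots x_{n-q}^q}$ into single-variable coefficients and the clean cancellation of $\RR(A',A\backslash A')$; the conceptual point is the trichotomy, dictated by the sign of $m-d$ (equivalently the comparison of $\deg f_{A\backslash A'}=m-p$ with $q$), together with the fact that the boundary value $d=m$ sits in the first branch yet is not covered by Corollary~\ref{dsmall}, so it must be handled through the separate identity $\Sres_m(f,g)=f$ (valid since $m=d\le n-1<n$).
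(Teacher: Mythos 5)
Your proof is correct and follows the same strategy as the paper: Corollary~\ref{dsmall} for $d\le\min\{m-1,n-1\}$, then the $h_{A'}$ rewriting together with Lemma~\ref{deggB<} and Remark~\ref{ha'} for $m\le d\le n-1$. You are in fact more careful than the paper at the boundary $d=m$: the paper's proof of Corollary~\ref{dn-1} only verifies the single pair $(p,q)=(m,0)$, noting $\Syl_{m,0}(A,B)=f=\Sres_m(f,g)$, but the first branch asserts $\Syl_{p,q}(A,B)=\binom{m}{p}\Sres_m(f,g)$ for \emph{every} $p$ with $p+q=m$, and Corollary~\ref{dsmall} (whose underlying Proposition~\ref{dsmall1} needs $q+1\le m-p$, i.e.\ $d\le m-1$) does not reach $d=m$. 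Your argument routes $d=m$ through the $h_{A'}$ computation as well: since $H_{A'}(\bfx)=\prod_i f_{A\backslash A'}(x_i)$ is a product of univariate factors in disjoint variables, the coefficient extraction of Remark~\ref{ha'} splits into $h_{A'}=\bigl(\coeff_{x^q}f_{A\backslash A'}\bigr)^{n-d-1}\prod_{i=1}^{p+1}f_{A\backslash A'}(x_i)$, and the scalar factor is $1$ exactly when $m-p=q$, i.e.\ $d=m$; the evaluation at $x\cup A'$ and cancellation of $\RR(A',A\backslash A')$ then give $\binom{m}{p}f$ uniformly in $p$. This makes the boundary case of the first branch explicit where the paper leaves it implicit.
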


\begin{proof} The first case is  direct from Corollary \ref{dsmall} for $d\le m-1$ while for $d=m$,  $\Syl_{m,0}(A,B) = f=\Sres_m(f,g) $ when $m<n$. \\The two following cases are
a consequence of the definition of $\Syl_{p,q}(A,B)$ in terms of $h_{A'}(x\cup A')$, Identity \eqref{deggB<} and Remark  \ref{ha'}.  For the second case we  observe that $h_{A'}=0$ and for the third one, that
the polynomial $f_{_{A\backslash A'}}(x_1)\cdots f_{_{A\backslash A'}}(x_{p+1})$ specialized into $x\cup A'$ equals $f_{_{A\backslash A'}}(x)\RR(A',A\backslash A')$. So, for $d=n-1$,
\begin{align*}\Syl_{p,q}(A,B)&= (-1)^{(m-d)(n-q)} \sum_{{A^{\prime
}\subset A,|A^{\prime}|=p}}h_{A'}(x\cup A')\frac{\mathcal{R}(x,A^{\prime})}{\mathcal{R}(A^{\prime},A\backslash A^{\prime})}\\
&=(-1)^{(m-d)(n-q)}\sum_{{A^{\prime}\subset
A,|A^{\prime}|=p}}\RR(x,A\backslash A')\RR(A',A\backslash A') \frac{\mathcal{R}(x,A^{\prime})}{\mathcal{R}(A^{\prime},A\backslash A^{\prime})}\\
&=(-1)^{(m+n-1)(p+1)}\sum_{{A^{\prime}\subset
A,|A^{\prime}|=p}}  f(x)\ = \ (-1)^{(p+1)(m+n-1)}\binom{m}{p}\,f(x).\end{align*}
\end{proof}

\subsection{The case $\max\{m,n\}\le d\le  m+n$} {\ } \\

In the previous section we concluded the case $0\le d\le n-1$ and $m$ arbitrary. By the symmetry mentioned in Identity \eqref{symmetry},
this also concludes the case $0\le d\le m-1$ and $n$ arbitrary. Thus it only remains to consider the case $d\ge m$ and $d\ge n$, i.e.
$\max\{m,n\}\le d\le  m+n$. \\

First we observe that the case $d=m+n$, i.e. $p=m$ and $n=q$ is already solved, since $\Syl_{m,n}(A,B)=\Res(f,g)\,f\,g$, as was mentioned in the introduction.\\

In the rest of this section we restrict to the case $\max\{m,n\}\le d\le m+n-1$. We show that we can express $\Syl_{p,q}(A,B)$ in these cases   in terms of $\Syl_{0,k}(A,B)$ and $\Syl_{m,d-m}(A,B)$, where $k:=m+n-d-1$.

\begin{proposition} \label{dbig} Let $0\le p\le m$, $0\le q\le n$  and set $d:=p+q$. Assume   $\max\{m,n\}\le d\le m+n-1$.  Then
\[
\Syl_{p,q}(A,B) = (-1)^c \binom{k}{n-q} \Syl_{0,k}(A,B) +(-1)^e \binom{k+1}{m-p} \Syl_{m,d-m}(A,B) ,\]
where  $k:=m+n-d-1$, $c:=(d-m)(n-q)+d-n$ and $e:=(d-m)(q+1)$.
\end{proposition}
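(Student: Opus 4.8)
The plan is to mimic the strategy used for Proposition~\ref{dsmall1}, but now peeling off variables on \emph{both} sides: we write $\Syl_{p,q}(A,B)$ as an iterated sum, apply the Exchange Corollary~\ref{exchange2} to the inner sums over $A'$, and then reorganize the resulting double sum over subsets of $B$. The new feature compared to the case $d\le n-1$ is that here $d\ge n$, so when we try to gather the subsets of $B$ into a single subset $D\subset B$ we run out of room: a subset $B'\subset B$ with $|B'|=q$ and a further subset $C'\subset B\setminus B'$ with $|C'|=p$ would need $p+q=d>n$ elements, which is impossible when $d>n$. Exactly this obstruction is what forces the appearance of \emph{two} terms on the right-hand side instead of one: the sum naturally splits according to how much of the "excess" $d-n$ is absorbed, and the two extreme regimes produce, respectively, a multiple of $\Syl_{0,k}(A,B)$ (with $k=m+n-d-1$) and a multiple of $\Syl_{m,d-m}(A,B)$.

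Concretely, I would first rewrite
\[
\Syl_{p,q}(A,B)(x)= (-1)^{p(m-d)} \sum_{\substack{B'\subset B\\ |B'|=q}} \left( \sum_{\substack{A'\subset A\\ |A'|=p}} \RR(A\setminus A',B\setminus B')\frac{\RR(x\cup B',A')}{\RR(A\setminus A',A')}\right) \frac{\RR(x,B')}{\RR(B',B\setminus B')},
\]
the sign coming from $\RR(A',B')\RR(A\setminus A',B\setminus B')=(-1)^{p(m-p)+\cdots}$ as in Proposition~\ref{dsmall1}; one must track this sign carefully since $d>m$ is allowed only through $q$, so $p\le m$ and $x\cup B'$ has size $q+1\le n+1$, while $A\setminus A'$ has size $m-p\ge 0$. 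Applying Corollary~\ref{exchange2} to the inner sum (the roles of $A$ and $B$ swapped, with $\bfx=x\cup B'$ of size $q+1$ and $B\setminus B'$ of size $n-q\ge p$ since $q\le n$ and $p\le m\le d$, hence $n-q=n+p-d\ge p$ iff $n\ge d$... here I must be careful: $n-q\ge p$ is equivalent to $d\le n$, which \emph{fails}) — so in fact I would apply the Exchange Lemma in the form where the $A$-side has the larger index. That is, I use that $m-p\le q+1$ may or may not hold, and choose the orientation of Lemma~\ref{exchange} accordingly: the point is that $\min\{m-p,\ n-q\}$ is the relevant parameter, and since $d\ge n$ we have $n-q=n-(d-p)=p-(d-n)\le p$, so the subset being summed over on $B$ has size $n-q<p$, and the Exchange Lemma must be invoked with $B\setminus B'$ (size $n-q$) in the role where $|B|\ge p$ is required — which fails, so instead one expands the term $\RR(x\cup B',A')$ differently.

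Given the sign-bookkeeping and the re-indexing, the key step — and the main obstacle — is the combinatorial identity that reorganizes
\[
\sum_{\substack{B'\subset B,\ |B'|=q\\ A'\subset A,\ |A'|=p}}(\cdots)
\]
into a linear combination of $\Syl_{0,k}$ and $\Syl_{m,d-m}$. I expect this to go by writing each term in the double sum as indexed by the pair $(A', D)$ where $D:=$ the part of $B\setminus B'$ that "overlaps" appropriately, then splitting into the case $A'=A$ (forcing the $\Syl_{m,d-m}$ term, with the binomial $\binom{k+1}{m-p}$ counting the ways the remaining $B$-variables distribute) and the complementary case (yielding $\Syl_{0,k}$ with $\binom{k}{n-q}$). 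The binomial coefficients $\binom{k}{n-q}$ and $\binom{k+1}{m-p}$ strongly suggest that after the dust settles one is counting, respectively, $(n-q)$-subsets of a $k$-set and $(m-p)$-subsets of a $(k+1)$-set, where $k=m+n-d-1=(n-q)+(m-p)-1$; a double-counting / Vandermonde-type identity $\binom{k}{n-q}+\binom{k}{m-p-1}=\binom{k+1}{m-p}$ or similar will close the gap. Finally I would verify the two exponents: $c=(d-m)(n-q)+d-n$ arises from the sign $(-1)^{p(m-d)}$ combined with the sign produced when reversing $\RR(\cdot,\cdot)$ factors to match the definition of $\Syl_{0,k}$, and $e=(d-m)(q+1)$ from the analogous computation in the $\Syl_{m,d-m}$ branch, where a block of $d-m$ variables gets reversed against $q+1$ others. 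These sign computations are routine but error-prone; I would double-check them against the already-known case $p=m$ (where the first term must vanish, $\binom{k}{n-q}$ becoming the obstruction... actually $q=d-m$ then, $n-q=n-d+m=k+1$, so $\binom{k}{k+1}=0$, confirming the first term drops out and $\Syl_{m,q}=(-1)^e\binom{k+1}{m-m}\Syl_{m,d-m}=\Syl_{m,d-m}$, a consistency check) and against $q=n$ (forcing the second term to behave correctly).
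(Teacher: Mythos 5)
Your proposal correctly diagnoses the central obstruction: the naive imitation of Proposition~\ref{dsmall1} breaks because applying Corollary~\ref{exchange2} with $\bfx=\{x\}\cup B'$ (size $q+1$) and $B\setminus B'$ (size $n-q$) in place of $B$ requires $n-q\ge p$, i.e.\ $d\le n$, which is exactly what fails here. That is the right thing to notice, and your consistency check at $p=m$ is also correct. But at the crucial moment you write ``so instead one expands the term $\mathcal{R}(x\cup B',A')$ differently'' and leave the actual resolution unspecified, and the speculative continuation that follows (splitting on whether $A'=A$, invoking a Vandermonde-type binomial identity) is not the mechanism that works.

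The missing idea in the paper is a simple but essential reparametrization \emph{before} invoking the Exchange Corollary: replace the sum over $A'$ with $|A'|=p$ by a sum over $A'':=A\setminus A'$ with $|A''|=m-p$. This changes the ``size'' parameter in Corollary~\ref{exchange2} from $p$ to $m-p$, after which the hypotheses \emph{are} satisfied in the regime $d\ge\max\{m,n\}$: one applies the Exchange Corollary with $B'\cup\{x\}$ (size $q+1\ge m-p$, using $d\ge m-1$) in place of $B$, and with $\bfx=B\setminus B'$ (size $n-q\le p = m-(m-p)$, using $d\ge n$). The sum over $A''\subset A$ thus becomes a sum over $C''\subset B'\cup\{x\}$ with $|C''|=m-p$. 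The two terms in the conclusion then arise not from distinguishing $A'=A$ versus $A'\ne A$, but from splitting the sum according to whether $x\in C''$ or $x\notin C''$; each branch is re-indexed (as in the last step of Proposition~\ref{dsmall1}) by a single subset $D\subset B$ of size $k$ or $k+1$ respectively, producing the binomial factors $\binom{k}{m-p-1}=\binom{k}{n-q}$ and $\binom{k+1}{m-p}$ and the two terms $\Syl_{0,k}$ and $\Syl_{m,d-m}$. Without the $A'\mapsto A\setminus A'$ flip, the Exchange step simply cannot be performed, so as written the proposal does not close.
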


\begin{proof}
We rewrite, for a fixed $B'\subset B$, $|B'|=q$,
{\small \begin{eqnarray*}\sum_{\substack{A^{\prime}\subset
A\\|A^{\prime}|=p}}\mathcal{R}(B^{\prime},A^{\prime})\mathcal{R}(B\backslash
B^{\prime},A\backslash A^{\prime})\frac{\mathcal{R}(x,A^{\prime
})}{\mathcal{R}(A^{\prime},A\backslash A^{\prime
})}  =\sum_{\substack{A^{\prime}\subset
A\\|A^{\prime}|=p}}\mathcal{R}(B^{\prime}\cup x,A')\frac{\mathcal{R}(B\backslash B^{\prime},A\backslash
A^{\prime})}{\mathcal{R}(A^{\prime
},A\backslash A^{\prime})}\\
 =  (-1)^{p(q+1)}\sum_{\substack{A^{\prime\prime}\subset
A\\|A^{\prime\prime}|=m-p}}\mathcal{R}(A\backslash A^{\prime\prime},B^{\prime}\cup x)\frac{\mathcal{R}(B\backslash B^{\prime},A^{\prime\prime})}{\mathcal{R}(A\backslash A^{\prime\prime
},A^{\prime\prime})}\\
 = (-1)^{p(q+1)}\sum_{\substack{C^{\prime\prime}\subset
B'\cup\{x\}\\|C^{\prime\prime}|=m-p}}\mathcal{R}(A,(B'\cup x )\backslash C'')\frac{\mathcal{R}(B\backslash B^{\prime},C'')}{\mathcal{R}(C'',(B'\cup x )\backslash C'')},\end{eqnarray*}}
where for the last equality we used the Exchange Corollary \ref{exchange2} for $B'\cup \{ x\}$ of size $q+1\ge m-p$ since $d\ge m-1$ instead of $B$ and $\bfx=B\backslash B'$ of size $n-q\le m-(m-p)$ since $d\ge n$.  Therefore, setting $\zeta:=(m-p)(n-q)$, we get
{\small \begin{eqnarray*}
 \Syl_{p,q}(A,B)(x)=(-1)^{\zeta+pq}
\sum_{\substack{B^{\prime
}\subset B\\|B^{\prime}|=q}}\,\Big(\sum_{\substack{A^{\prime}\subset
A\\|A^{\prime}|=p}}\mathcal{R}(B^{\prime},A^{\prime})\mathcal{R}(B\backslash B^{\prime},A\backslash
A^{\prime})\frac{\mathcal{R}(x,A^{\prime
})}{\mathcal{R}(A^{\prime},A\backslash A^{\prime
})}\Big)\frac{\mathcal{R}(x,B^{\prime})}{\mathcal{R}(B^{\prime},B\backslash B^{\prime})}\\ =
(-1)^{\zeta+p}\sum_{\substack{B^{\prime
}\subset B\\|B^{\prime}|=q}}\,\Big(\sum_{\substack{C^{\prime\prime}\subset
B'\cup\{x\}\\|C^{\prime\prime}|=m-p}}\mathcal{R}(A,(B'\cup x )\backslash C'')\frac{\mathcal{R}(B\backslash B^{\prime},C'')}{\mathcal{R}(C'',(B'\cup x )\backslash C'')}\Big)\frac{\mathcal{R}(x,B^{\prime})}{\mathcal{R}(B^{\prime},B\backslash B^{\prime})}.\end{eqnarray*}}
Now we split this sum (without considering the sign $(-1)^{\zeta+p}$ for now) in two sums, according whether $x\in C''$ or $x\notin C''$. The first sum $S_1$, when $x\in C''$,  equals
{\begin{align*}
S_1&=
\sum_{\substack{B'\subset B, |B'|=q \\C'\subset
B',|C'|=m-p-1 }}\mathcal{R}(A,B'\backslash C')\frac{\mathcal{R}(B\backslash B^{\prime},C'\cup x )}{\mathcal{R}(C'\cup x , B'\backslash C')}\frac{\mathcal{R}(x,B^{\prime})}{\mathcal{R}(B^{\prime},B\backslash B^{\prime})}
\\
&= (-1)^{n-q+(n-q)q}\sum_{\substack{B'\subset B, |B'|=q \\C'\subset
B',|C'|=m-p-1 }}\mathcal{R}(A,B'\backslash  C')\frac{\mathcal{R}(x,(B\backslash B')\cup C')}{\mathcal{R}((B\backslash B^{\prime})\cup C',B^{\prime}\backslash C')}\\
&= (-1)^{n(q+1)}\sum_{\substack{B'\subset B, |B'|=q \\C'\subset
B',|C'|=m-p-1 }}\mathcal{R}(A,B\backslash ((B\backslash B')\cup  C')\frac{\mathcal{R}(x,(B\backslash B')\cup C')}{\mathcal{R}((B\backslash B^{\prime})\cup C',B\backslash ((B\backslash B')\cup  C'))}.
\end{align*} }

Finally, replacing the summation over $B'$  by a sum over  $D=(B\backslash B^{\prime})\cup C^{\prime}\subset B$, where $|D|=n-q+m-p-1=m+n-d-1=k$ (observe that
$0\le k\le \min\{m-1,n-1\} $ since $\max\{m,n\}\le d\le m+n-1$), we get

\begin{align*}S_1&= (-1)^{n(q+1)}\sum_{\substack{D\subset B, |D|=k\\C^{\prime}\subset
D,|C^{\prime}|=m-p-1}}\mathcal{R}(A,B\backslash D)\frac{\mathcal{R}(x,D)}{\mathcal{R}(D, B\backslash D)}\\
&=(-1)^{n(q+1)} \binom{k}{m-p-1}\sum_{{D\subset B, |D|=k}}\mathcal{R}(A,B\backslash D)\frac{\mathcal{R}(x,D)}{\mathcal{R}(D, B\backslash D)}\\
&= (-1)^{n(q+1)}\binom{k}{n-q}\Syl_{0,k}(A,B)(x).
\end{align*}
Let $S_2$ denote the second sum (without the sign  $(-1)^{\zeta+p}$ for now), when $x\notin C''$:

{\begin{align*}
S_2&=\sum_{\substack{B'\subset B, |B'|=q \\C''\subset
B',|C''|=m-p }} \mathcal{R}(A,(B'\cup x)\backslash C'')\frac{\mathcal{R}(B\backslash B^{\prime},C'')}{\mathcal{R}(C'', (B'\cup x)\backslash C'')}\frac{\mathcal{R}(x,B^{\prime})}{\mathcal{R}(B^{\prime},B\backslash B')}\\ & =
(-1)^\varepsilon f(x)\sum_{\substack{B'\subset B, |B'|=q \\C''\subset
B',|C''|=m-p }} \mathcal{R}(A,B'\backslash C'')\frac{\mathcal{R}(x,B^{\prime}\backslash C'' )}{\mathcal{R}(B'\backslash C'',(B\backslash B^{\prime})\cup C'')}
\\ & =
(-1)^\varepsilon f(x)\sum_{\substack{B'\subset B, |B'|=q \\C''\subset
B',|C''|=m-p }} \mathcal{R}(A,B\backslash ((B\backslash B')\cup  C''))\frac{\mathcal{R}(x,B\backslash ((B\backslash B')\cup  C'' ))}{\mathcal{R}(B\backslash((B\backslash B')\cup  C''),(B\backslash B^{\prime})\cup C'')}
\\
&= (-1)^\varepsilon f(x)\sum_{\substack{D\subset B, |D|=k+1 \\C''\subset
D,|C''|=m-p }}\mathcal{R}(A,B\backslash D)\frac{\mathcal{R}(x,B\backslash D)}{\mathcal{R}(B\backslash D,D)}\\
&= (-1)^\varepsilon \binom{k+1}{m-p} \, f(x)\sum_{{D\subset B, |D|=k+1}}\mathcal{R}(A,B\backslash D)\frac{\mathcal{R}(x,B\backslash D)}{\mathcal{R}(B\backslash D,D)}\\ &=
 (-1)^\varepsilon \binom{k+1}{m-p} \, f(x)\sum_{{D'\subset B, |D'|=d-m}}\mathcal{R}(A,D')\frac{\mathcal{R}(x,D')}{\mathcal{R}(D',B\backslash D'))}\\
 &=  (-1)^\varepsilon \binom{k+1}{m-p} \Syl_{m,d-m}(A,B),
\end{align*} }
where $\varepsilon:=m+ m-p+(m-p)(n-q) + (m-p)(q+p-m) \equiv m + n(m-p) \pmod 2$, and $D= (B\backslash B')\cup C''$ with $|D|=m+n-d-1+1=k+1$ and $D'= B\backslash D$ with $|D'|=n-(k+1)=d-m$,
by the definition of $\Syl_{m,d-m}$.

Finally we add up  the  signs
\begin{align*}c:&=\zeta+p+ n(q+1)\equiv (d-m)(n-q)+d-n \pmod 2\\
e:&= \zeta+p+ \varepsilon = (m-p)(n-q)+m+n(m-p)\equiv  (d-m)(q+1)\pmod 2
\end{align*}
to get the expression in the claim.
\end{proof}

We end up considering the only remaining case,  $\Syl_{m,d-m}(A,B)$ for  $\max\{m,n\} \le d\le m+n-1$.
Remember that
\begin{align*}
 \Syl_{m,d-m}(A,B)(x)&=\sum_{B^{\prime}\subset
B, |B^{\prime}|=d-m}\mathcal{R}(A,B^{\prime})\,
\,\frac{\mathcal{R}(x,A)\,\mathcal{R}(x,B^{\prime})}{\mathcal{R}(B^{\prime},B\backslash B^{\prime})}\\ & =  f(x)\,\sum_{B^{\prime}\subset
B, |B^{\prime}|=d-m}\mathcal{R}(A,B^{\prime})\,
\,\frac{\mathcal{R}(x,B^{\prime})}{\mathcal{R}(B^{\prime},B\backslash B^{\prime})} ,\end{align*}
which is a polynomial of degree bounded by $m+(d-m)=d$.\\
Since $\max\{m,n\}\le d\le m+n-1$,   $k:=m+n-d-1$ satisfies $k\le \min\{m-1,n-1\}$. Thus, for these values of $k$ we have  the Bezout Identity \eqref{Bezout} $$\Sres_k(f,g)=F_k(f,g) f + G_k(f,g) g.$$
Here $F_k(f,g)$ is a polynomial of degree bounded by $n-k-1=d-m$.

\begin{proposition} \label{mq} Let $\max\{m,n\} \le d\le m+n-1$ and set $k:=m+n-d-1$. Then
$$\Syl_{m,d-m}(A,B)= (-1)^{(d-m)n+m+n-1} F_k(f,g)\,f.$$
\end{proposition}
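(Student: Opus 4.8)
The plan is to identify the sum $\sum_{B'\subset B,|B'|=d-m}\mathcal{R}(A,B')\frac{\mathcal{R}(x,B')}{\mathcal{R}(B',B\backslash B')}$ with a Sylvester single sum in the exchanged variables, and then invoke the already-established identification of single sums with subresultants and their B\'ezout cofactors. First I would observe that $\mathcal{R}(A,B')=\prod_{\beta\in B'}f(\beta)$, so after pulling out $f(x)$ the remaining factor is, up to sign, exactly $\Syl_{0,d-m}(B,A)(x)$ read with the roles of $A$ and $B$ swapped: indeed $\Syl_{0,d-m}(B,A)(x)=\sum_{A'\subset A,|A'|=d-m}\mathcal{R}(B,A\backslash A')\frac{\mathcal{R}(x,A')}{\mathcal{R}(A',A\backslash A')}$, which is not literally our sum, so instead I would go through the symmetry relation \eqref{symmetry} together with the single-sum description. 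More directly: set $\ell:=d-m$. Since $\max\{m,n\}\le d\le m+n-1$ we have $0\le \ell\le n-1$ and moreover $\ell = d-m \ge n-m$ may be large, but crucially $k=m+n-d-1 = n-\ell-1$, so $\ell \le n-1$ and the single sum $\Syl_{0,\ell}(B,A)$ and the quantity $\Sres_k(g,f)$ are both defined. The key computation is that $f(x)\cdot\big(\text{our sum}\big)$ is, up to an explicit sign, $\Syl_{m,\ell}(A,B)$ itself; that is just the displayed rewriting in the excerpt, so really what must be shown is the second factor equals $(-1)^{?}F_k(f,g)$.

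The heart of the argument is therefore to recognize $\sum_{B'\subset B,|B'|=\ell}\mathcal{R}(A,B')\frac{\mathcal{R}(x,B')}{\mathcal{R}(B',B\backslash B')}$ as the cofactor $F_k(f,g)$ up to sign. I would do this by the interpolation characterization of Proposition~\ref{Lagr}: extend $x$ to $\bfx=(x_1,\dots,x_{n-\ell})=(x_1,\dots,x_{k+1})$ and consider $\Phi(\bfx):=\sum_{B'\subset B,|B'|=\ell}\mathcal{R}(A,B')\frac{\mathcal{R}(\bfx,B')}{\mathcal{R}(B',B\backslash B')}\in S_{(n-\ell,\ell)}$, the unique symmetric polynomial with $\Phi(B\backslash B')=\mathcal{R}(A,B')=\prod_{\beta\in B'}f(\beta)$. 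Then $f(x_1)\cdots f(x_{k+1})\cdot\Phi(\bfx)$ is a symmetric polynomial whose value at $B\backslash B'$ is $\prod_{\beta\in B\backslash B'}f(\beta)\cdot\prod_{\beta\in B'}f(\beta)=\prod_{\beta\in B}f(\beta)=\mathcal{R}(A,B)=\Res(f,g)$, a constant independent of $B'$; but that does not immediately pin down $\Phi$ as a rational expression in $f,g$. Instead I expect the cleanest route is to reduce to the single-sum/subresultant dictionary already proved: by the symmetry \eqref{relsingle}-type manipulations and Proposition~\ref{ss=sub} applied with $(g,f)$ in place of $(f,g)$ and $d$ replaced by $k$ (legitimate since $k<\min\{m,n\}$), one gets $\Syl_{0,k}(B,A)=\Sres_k(g,f)$; and the companion single sum $\Syl_{k,0}(B,A)$, which by the Exchange Lemma \eqref{relsingle} differs from $\Syl_{0,k}(B,A)$ only by the sign $(-1)^{k(n-k)}$, should on the other hand be rewritten — via the same bookkeeping as in the excerpt — as $f(x)\cdot\big(\text{our sum}\big)$ times a sign. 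Matching these two expressions for $\Syl_{k,0}(B,A)$ (or rather $\Syl_{m,d-m}(A,B)$) against the second-line B\'ezout formula of Theorem~\ref{fulldescription} identifies our sum with $F_k(f,g)$.

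More concretely, the step I would carry out is: start from $\Syl_{m,d-m}(A,B)=f(x)\,\Phi(x)$ as displayed, apply the symmetry relation $\Syl_{m,d-m}(A,B)=(-1)^{m(d-m)+(m-m)(n-(d-m))}\Syl_{d-m,m}(B,A)=(-1)^{m(d-m)}\Syl_{d-m,m}(B,A)$ from the first line of \eqref{symmetry}, and then recognize $\Syl_{\ell,m}(B,A)$ — which has $\ell + m = d \ge n = \max$ in the $(B,A)$ picture, wait, in the $(B,A)$ picture the degrees are $n$ and $m$ with $n$ possibly $\ge m$ or not; this forces a case check. The main obstacle, and where I'd spend most care, is precisely this bookkeeping of which regime $(\ell,m)$ falls into for the pair $(B,A)$: since $d\ge\max\{m,n\}$ we have $\ell=d-m\ge n-m$, so in the $(B,A)$-world we are in the "$p$ maximal" case again and cannot simply close the loop without the B\'ezout cofactor reappearing. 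The honest approach is thus the induction-free one used for the single sums only if $k<\min\{m,n\}$, which holds here, so I would instead prove directly by interpolation that $\Phi(x)=(-1)^{?}F_k(f,g)(x)$: exhibit $F_k(f,g)$ via its determinantal formula as a quotient of a Vandermonde-bordered determinant (exactly as in Proposition~\ref{matrixforms}, but with the last columns carrying $x_i^{j}$ rather than $x_i^{j}f(x_i)$ and a $1$/$0$ pattern), check it lies in $S_{(n-\ell,\ell)}$ with the same $\binom{n}{\ell}$ interpolation values $\prod_{\beta\in B'}f(\beta)$ at $B\backslash B'$ — this uses the Poisson formula $\Res(f,g)=\prod_{\alpha}g(\alpha)$ and the B\'ezout identity evaluated at the $\beta_i$ — and conclude by uniqueness. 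The sign $(-1)^{(d-m)n+m+n-1}$ then drops out of carefully tracking the row/column reordering between the determinant of Proposition~\ref{matrixforms} and that of $F_k$. I expect the sign verification, not the structural identification, to be the genuinely fiddly part.
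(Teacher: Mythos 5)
Your proposal circles the right ideas but never settles on a complete argument, and the route you finally commit to has a genuine obstruction. You correctly factor $\Syl_{m,d-m}(A,B)=f(x)\,\Phi(x)$ with $\Phi(x)=\sum_{B'\subset B,|B'|=d-m}\mathcal{R}(A,B')\,\mathcal{R}(x,B')/\mathcal{R}(B',B\backslash B')$, and you correctly identify that the B\'ezout identity evaluated at a $\beta\in B$ with $g(\beta)=0$ is what must link this to $F_k$. But your plan to close the argument by exhibiting a \emph{multivariate} determinantal formula for $F_k$ living in $S_{(n-\ell,\ell)}$ (with $\ell=d-m$), in the style of Proposition~\ref{matrixforms}, and then invoking uniqueness of interpolation, does not go through as stated. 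The multivariate matrix in Proposition~\ref{matrixforms}, transported to the index $\ell$ in place of $d$, is square of size $m+n-2\ell$, whereas the matrix defining $F_k$ is square of size $m+n-2k$, and since $\ell=n-k-1$ these sizes agree only in the degenerate case $k=(n-1)/2$. There is no off-the-shelf square ``Vandermonde-bordered $F_k$'' of the right shape with $k+1$ variable columns, so the object you want to interpolate against would have to be built and its symmetry, multidegree, and specializations at all $B\backslash B'$ proved from scratch — and even then you would still owe a separate coefficient-extraction step (analogous to Remark~\ref{coeff} and Proposition~\ref{ss=sub}) to get the univariate $F_k(f,g)$ back out. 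None of this is sketched, and it is not at all clear it can be made to work.

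The paper avoids all of this by \emph{not} factoring out $f$ and by staying univariate. Both $\Syl_{m,d-m}(A,B)$ and $F_k(f,g)\,f$ are polynomials of degree at most $d\le m+n-1$, so they are determined by their values at the $m+n$ points of $A\cup B$; ordinary one-variable Lagrange interpolation suffices, and no multivariate machinery is needed. They both vanish on $A$ because $f$ is a common factor. At $\beta\in B$, the crucial observation is exactly the one you flagged: $g(\beta)=0$ gives $(F_k\,f)(\beta)=(F_k f+G_k g)(\beta)=\Sres_k(f,g)(\beta)$, which equals $\Syl_{0,k}(A,B)(\beta)$ by Proposition~\ref{ss=sub}. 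One then computes $\Syl_{m,d-m}(A,B)(\beta)$ and $\Syl_{0,k}(A,B)(\beta)$ directly from the single-sum definitions — in each case re-indexing the sum over subsets of $B\backslash\{\beta\}$ — and compares, tracking signs. So your instinct about the key mechanism (B\'ezout at the roots of $g$) was correct, but the univariate interpolation at $A\cup B$, rather than a multivariate determinantal construction, is what makes the argument close.
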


\begin{proof}Since $\Syl_{m,d-m}(A,B)$ and $F_k(f,g)\,f$ are both polynomials of degree bounded by $d\le m+n-1$, it suffices to show the equality by interpolating them on the indeterminates of  $A\cup B$.
It is clear that they both vanish --thus coincide-- on any $\alpha\in A$. So it remains to evaluate both polynomials on $\beta \in B$. We have
\begin{align*}
 \Syl_{m,d-m}(A,B)(\beta)&=  f(\beta)\,\sum_{B^{\prime}\subset
B, |B^{\prime}|=d-m, \beta\notin B'}\mathcal{R}(A,B^{\prime})\,
\,\frac{\mathcal{R}(\beta,B^{\prime})}{\mathcal{R}(B^{\prime},B\backslash B^{\prime})} \\
& =  f(\beta)\,\sum_{B^{\prime}\subset
B\backslash \{\beta\}, |B^{\prime}|=d-m}\mathcal{R}(A,B^{\prime})\,
\,\frac{\mathcal{R}(\beta,B^{\prime})}{\RR( B',\beta)\mathcal{R}(B^{\prime},B\backslash (\beta \cup  B^{\prime}))} \\ &=
(-1)^{d-m} f(\beta)\,\sum_{B^{\prime}\subset
B\backslash \{\beta\}, |B^{\prime}|=d-m}
\,\frac{\mathcal{R}(A,B^{\prime})}{\mathcal{R}(B^{\prime},B\backslash (\beta \cup  B^{\prime}))}.\end{align*}
On the other hand, since $g(\beta)=0$,
$$(F_k(f,g)\,f)(\beta) = (F_k(f,g)\,f+G_k(f,g)\,g)(\beta)=\Sres_k(f,g)(\beta)= \Syl_{0,k}(A,B)(\beta)$$
by Proposition \ref{ss=sub}. Therefore we need to compute $\Syl_{0,k}(A,B)(\beta)$.
But \begin{align*}
 \Syl_{0,k}(A,B)(\beta)&=\sum_{D\subset
B, |D|=k}\mathcal{R}(A,B\backslash D)\,
\,\frac{\mathcal{R}(\beta,D)}{\mathcal{R}(D,B\backslash D)}\\  &=\sum_{D\subset
B\backslash\{\beta\}, |D|=k}\RR(A,\beta)\mathcal{R}(A,B\backslash ( \beta  \cup D))\,
\,\frac{\mathcal{R}(\beta,D)}{\RR(D,\beta)\mathcal{R}(D,B\backslash (\beta \cup D))} \\
& = (-1)^{m-k} f(\beta)\,\sum_{D\subset
B\backslash\{\beta\}, |D|=k} \frac{\mathcal{R}(A,B\backslash ( \beta  \cup D))}{\mathcal{R}(D,B\backslash (\beta \cup D))} \\ &=
(-1)^{m-k} f(\beta)\,\sum_{B^{\prime}\subset
B\backslash \{\beta\}, |B^{\prime}|=d-m}
\,\frac{\mathcal{R}(A,B')}{\mathcal{R}(B\backslash ( \beta \cup  B^{\prime}),B^{\prime})}\\
&=
(-1)^{m-k+k(d-m)} f(\beta)\,\sum_{B^{\prime}\subset
B\backslash \{\beta\}, |B^{\prime}|=d-m}
\,\frac{\mathcal{R}(A,B')}{\mathcal{R}(B^{\prime},B\backslash ( \beta \cup  B^{\prime}))},\end{align*}
setting $B':=B\backslash(\beta\cup D)$. \\Therefore $\Syl_{m,m-d}(A,B)(\beta)=  (-1)^{m-k+(k+1)(d-m)}\Syl_{0,k}(A,B)(\beta)$.
The statement follows from $m-k+(k+1)(d-m)\equiv (d-m)n+m+n-1 \pmod 2$.
\end{proof}

The previous proposition allows to recover expressions in roots for the polynomials $F_k(f,g)$ and $G_k(f,g)$ appearing in   Bezout Identity \eqref{Bezout}. Note that these identities already appeared in \cite[Art. 29]{sylv}, and more recently in \cite{KS12,DKS15}.
\begin{corollary} \label{FkGk} Let $0\le k\le \min \{m-1,n-1\}$. Then
\begin{align*}F_k(f,g)&= (-1)^{m-k}\sum_{B'\subset B, |B'|=k+1}\RR(A,B\backslash B')\frac{\RR(x,B\backslash B')}{\RR(B',B\backslash B')}
\\
G_k(f,g)&=(-1)^{m-k+1}\sum_{A'\subset A, |A'|=k+1}\RR(A\backslash A',B)\frac{\RR(x,A\backslash A')}{\RR(A\backslash A',A')}.\end{align*}
\end{corollary}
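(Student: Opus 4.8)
The plan is to obtain both identities by simply unwinding Proposition~\ref{mq}, the only real work being the sign bookkeeping. Fix $k$ with $0\le k\le\min\{m-1,n-1\}$ and set $d:=m+n-k-1$, so that $\max\{m,n\}\le d\le m+n-1$, $k=m+n-d-1$, and the useful relations $d-m=n-k-1$ and $d-n=m-k-1$ hold. Note that for the cardinality constraint $\max\{m,n\}\le d\le m+n-1$ is symmetric in $m$ and $n$, so every statement used below applies equally to the pair $(g,f)$.

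First I would establish the formula for $F_k(f,g)$. By Proposition~\ref{mq}, $\Syl_{m,d-m}(A,B)=(-1)^{(d-m)n+m+n-1}F_k(f,g)\,f$, while from the displayed expression for $\Syl_{m,d-m}(A,B)$ recalled just before that proposition, $\Syl_{m,d-m}(A,B)=f\cdot\sum_{B'\subset B,\,|B'|=d-m}\RR(A,B')\frac{\RR(x,B')}{\RR(B',B\backslash B')}$. Since $f\not\equiv 0$, cancelling $f$ gives $F_k(f,g)=(-1)^{(d-m)n+m+n-1}\sum_{|B'|=d-m}\RR(A,B')\frac{\RR(x,B')}{\RR(B',B\backslash B')}$. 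Now I reindex by $B'':=B\backslash B'$, which has $|B''|=|B|-(d-m)=k+1$. One has $\RR(A,B')=\RR(A,B\backslash B'')$, $\RR(x,B')=\RR(x,B\backslash B'')$, and $\RR(B',B\backslash B')=\RR(B\backslash B'',B'')=(-1)^{(d-m)(k+1)}\RR(B'',B\backslash B'')$. Collecting the signs, the exponent is $(d-m)n+m+n-1+(d-m)(k+1)=(n-k-1)(n+k+1)+m+n-1=n^2-(k+1)^2+m+n-1\equiv m-k\pmod 2$, which yields exactly the claimed expression for $F_k(f,g)$.

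For $G_k(f,g)$ I would invoke the identity $G_k(f,g)=(-1)^{(m-k)(n-k)}F_k(g,f)$, which is immediate from the determinantal definitions in Theorem~\ref{fulldescription}: the matrix defining $F_k(g,f)$ is obtained from the one defining $G_k(f,g)$ by exchanging the block of $n-k$ rows coming from $f$ with the block of $m-k$ rows coming from $g$, a permutation of sign $(-1)^{(n-k)(m-k)}$. Applying the computation of the previous paragraph verbatim with $(g,f,B,A,n,m)$ in place of $(f,g,A,B,m,n)$ — legitimate by the symmetry noted above, and using $d-n=m-k-1$ so that $|A\backslash A''|=k+1$ for $|A'|=d-n$ — gives $F_k(g,f)=(-1)^{n-k}\sum_{A''\subset A,\,|A''|=k+1}\RR(B,A\backslash A'')\frac{\RR(x,A\backslash A'')}{\RR(A'',A\backslash A'')}$. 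Then I would rewrite $\RR(B,A\backslash A'')=(-1)^{n(m-k-1)}\RR(A\backslash A'',B)$ and $\RR(A'',A\backslash A'')=(-1)^{(k+1)(m-k-1)}\RR(A\backslash A'',A'')$, multiply by $(-1)^{(m-k)(n-k)}$, and check that the total exponent $(m-k)(n-k)+(n-k)+n(m-k-1)+(k+1)(m-k-1)$ reduces mod $2$ to $m-k+1$, producing the stated formula for $G_k(f,g)$.

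The only delicate point is this sign arithmetic: each re-indexing $B'\mapsto B\backslash B'$ or $A''\mapsto A\backslash A''$ reorders the factors of an $\RR(\cdot,\cdot)$ product and injects a sign, and these must be combined with the sign of Proposition~\ref{mq} and of the row-block swap, using $d-m=n-k-1$ and $d-n=m-k-1$ to carry out the modulo-$2$ reductions. I do not expect any conceptual obstacle beyond executing these reductions carefully.
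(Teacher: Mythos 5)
Your proposal is correct and follows essentially the same route as the paper: cancel $f$ in Proposition~\ref{mq}, reindex $B'\mapsto B\backslash B'$ with the attendant $\mathcal{R}$-swap signs, and obtain the $G_k$ formula from $G_k(f,g)=(-1)^{(m-k)(n-k)}F_k(g,f)$. The one addition you make is to justify that last identity from the determinantal block-swap, which the paper states without proof; your modulo-$2$ reductions all check out.
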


\begin{proof} The identity for $F_k(f,g)$ is simply obtained by comparing $\Syl_{m,d-m}(A,B)$ and $F_k(f,g)\,f$ and simplifying $f$ in both sides:
\begin{align*} F_k(f,g)& = (-1)^{(d-m)n+m+n-1}\sum_{B^{\prime}\subset
B, |B^{\prime}|=d-m}\mathcal{R}(A,B^{\prime})\,
\,\frac{\mathcal{R}(x,B^{\prime})}{\mathcal{R}(B^{\prime},B\backslash B^{\prime})}\\
&= (-1)^{d-n+1}\sum_{B^{\prime}\subset
B, |B^{\prime}|=d-m}\mathcal{R}(A,B^{\prime})\,
\,\frac{\mathcal{R}(x,B^{\prime})}{\mathcal{R}(B\backslash B^{\prime}, B^{\prime})}\\
&= (-1)^{m-k} \sum_{D\subset
B, |D|=k+1}\mathcal{R}(A,B\backslash D)\,
\,\frac{\mathcal{R}(x,B\backslash D)}{\mathcal{R}(D,B\backslash D)}.
\end{align*}
On the other hand, we have that $$G_k(f,g) =(-1)^{(m-k)(n-k)} F_k(g,f)$$ which implies
\begin{align*}G_k(f,g)&= (-1)^{(m-k+1)(n-k)}\sum_{A'\subset A, |A'|=k+1}\RR(B,A\backslash A')\frac{\RR(x,A\backslash A')}{\RR(A',A\backslash A')}\\
&= (-1)^{m-k+1} \sum_{A'\subset A, |A'|=k+1}\RR(A\backslash A', B)\frac{\RR(x,A\backslash A')}{\RR(A\backslash A', A')}.\end{align*}
\end{proof}

Propositions \ref{dbig}, \ref{ss=sub} and \ref{mq} then imply
\begin{corollary} Let $0\le p\le m$, $0\le q\le n$  and set $d:=p+q$. Assume   $\max\{m,n\}\le d\le m+n-1$. Then
\[
\Syl_{p,q}(A,B)=  (-1)^{(d-m)(n-q)+d-n} \Big( \binom{k}{n-q} \Sres_k(f,g) -  \binom{k+1}{m-p} F_k\,f\Big),
\]
where $k:=m+n-d-1$.
\end{corollary}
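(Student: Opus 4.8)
The plan is to derive this Corollary directly by feeding the three earlier results into one another and then reconciling the signs. By hypothesis $\max\{m,n\}\le d\le m+n-1$, so $k:=m+n-d-1$ satisfies $0\le k\le\min\{m-1,n-1\}$; in particular $k\le n-1$ and $k\le m$. Hence Proposition \ref{ss=sub}, applied with its ``$d$'' taken to be $k$, gives $\Syl_{0,k}(A,B)=\Sres_k(f,g)$, and Proposition \ref{mq} gives $\Syl_{m,d-m}(A,B)=(-1)^{(d-m)n+m+n-1}F_k(f,g)\,f$.

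Next I substitute these two identities into the formula of Proposition \ref{dbig},
\[
\Syl_{p,q}(A,B)=(-1)^c\binom{k}{n-q}\Syl_{0,k}(A,B)+(-1)^e\binom{k+1}{m-p}\Syl_{m,d-m}(A,B),
\]
where $c\equiv(d-m)(n-q)+d-n$ and $e\equiv(d-m)(q+1)\pmod2$. This turns the right-hand side into $(-1)^c\binom{k}{n-q}\Sres_k(f,g)+(-1)^{e+(d-m)n+m+n-1}\binom{k+1}{m-p}F_k(f,g)\,f$, so everything reduces to checking the single parity identity $e+(d-m)n+m+n-1\equiv c+1\pmod2$.

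That verification is the only computational point, and it is short: expanding,
\[
e+(d-m)n+m+n-1-(c+1)\equiv(d-m)\big[(q+1)+n-(n-q)\big]+(m-d)\equiv(d-m)(2q+1)+(m-d)\pmod2,
\]
which is $\equiv(d-m)+(m-d)\equiv0\pmod2$ since $2q$ is even. Therefore the coefficient of $\binom{k+1}{m-p}F_k(f,g)\,f$ equals $-(-1)^c$, and factoring $(-1)^c=(-1)^{(d-m)(n-q)+d-n}$ out of both terms yields precisely the asserted formula. Since each ingredient is already established, there is no real obstacle: the only thing that demands care is keeping the mod-$2$ exponents of $-1$ consistent throughout. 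One may also remark that, using the Bezout identity \eqref{Bezout} $\Sres_k(f,g)=F_k(f,g)\,f+G_k(f,g)\,g$, this expression is interchangeable with the alternative form appearing in Theorem \ref{fulldescription} for the range $n\le d\le m+n-1$.
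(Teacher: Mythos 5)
Your proof is correct and follows the paper's own argument exactly: you substitute Propositions \ref{ss=sub} and \ref{mq} into Proposition \ref{dbig} and verify the same parity congruence $e+(d-m)n+m+n\equiv c\pmod 2$ (equivalently, $e+(d-m)n+m+n-1\equiv c+1$, as you phrase it). The only difference is that you spell out the modular arithmetic that the paper leaves to the reader.
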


\begin{proof} We have
\begin{align*}\Syl_{p,q}(A,B)& = (-1)^c \binom{k}{n-q} \Syl_{0,k}(A,B) +(-1)^e \binom{k+1}{m-p} \Syl_{m,d-m}(A,B)\\
& =(-1)^c \binom{k}{n-q} \Sres_k(f,g) + (-1)^e  (-1)^{(d-m)n+m+n-1} \binom{k+1}{m-p}F_k(f,g)\,f ,
\end{align*}
where $c:=(d-m)(n-q)+d-n$ and $e:=(d-m)(q+1)$.
The statement follows from $e+(d-m)n+m+n\equiv (d-m)(n-q)+d-n\pmod 2$.
\end{proof}

The following proposition is another application of the Exchange Lemma \ref{exchange}, and gives simple identities for the polynomials $F_k$ and $G_k$ in the Bezout Identity (\ref{Bezout}) in terms of the roots, which enable us to make a further connection with Schur polynomials.

\begin{proposition} \label{FkGk2}Let $0\le k\leq \min\{m-1,n-1\}$.
Then
\begin{eqnarray*}
F_k&=& (-1)^{k(m-k)}  \sum_{C'\subset A\cup\{x\}, |C'|=k+1}\frac{R((A\cup\{x\})\backslash C', B)}{R(C', (A\cup\{x\})\backslash C')},\\
G_k&=& (-1)^{m(n-k)} \sum_{D'\subset B\cup\{x\}, |D'|=k+1} \frac{R((B\cup\{x\})\backslash D', A)}{R(D', (B\cup\{x\})\backslash D')}.
\end{eqnarray*}
\end{proposition}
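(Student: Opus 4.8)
The plan is to deduce both identities from the root expressions for $F_k$ and $G_k$ in Corollary~\ref{FkGk} by applying the Exchange Corollary~\ref{exchange2} to the enlarged set $A\cup\{x\}$, taking the empty tuple of auxiliary variables (i.e. $r=0$, so that $\RR(\bfx,\cdot)=1$). Recall from Corollary~\ref{FkGk} that
\[F_k= (-1)^{m-k}\sum_{B'\subset B,\,|B'|=k+1}\RR(A,B\backslash B')\,\frac{\RR(x,B\backslash B')}{\RR(B',B\backslash B')}.\]
Since $x\notin A$ we have $\RR(A,B\backslash B')\,\RR(x,B\backslash B')=\RR(A\cup\{x\},B\backslash B')$, so the sum on the right is exactly the right-hand side of Corollary~\ref{exchange2} in which $A\cup\{x\}$ (of cardinality $m+1$) plays the role of $A$, the integer $p$ is $k+1$, and $r=0$; the hypotheses are met because $|B|=n\ge k+1$ and $(m+1)-(k+1)=m-k\ge 0$.

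By Corollary~\ref{exchange2} this sum therefore also equals the corresponding left-hand side, namely $\sum_{C'\subset A\cup\{x\},\,|C'|=k+1}\RR((A\cup\{x\})\backslash C',B)/\RR((A\cup\{x\})\backslash C',C')$, which gives $F_k=(-1)^{m-k}\sum_{C'\subset A\cup\{x\},\,|C'|=k+1}\RR((A\cup\{x\})\backslash C',B)/\RR((A\cup\{x\})\backslash C',C')$. Flipping the denominator via $\RR((A\cup\{x\})\backslash C',C')=(-1)^{(m-k)(k+1)}\RR(C',(A\cup\{x\})\backslash C')$ (using $|(A\cup\{x\})\backslash C'|=m-k$ and $|C'|=k+1$) turns the prefactor $(-1)^{m-k}$ into $(-1)^{m-k+(m-k)(k+1)}=(-1)^{k(m-k)}$, which is precisely the asserted formula for $F_k$.

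For $G_k$ I would not redo the computation but instead invoke the relation $G_k(f,g)=(-1)^{(m-k)(n-k)}F_k(g,f)$ already used in the proof of Corollary~\ref{FkGk}. Applying the formula for $F_k$ just obtained, with the roles of $f,g$, of $A,B$ and of $m,n$ interchanged (the hypothesis $0\le k\le\min\{m-1,n-1\}$ being symmetric in $m$ and $n$), one gets $F_k(g,f)=(-1)^{k(n-k)}\sum_{D'\subset B\cup\{x\},\,|D'|=k+1}\RR((B\cup\{x\})\backslash D',A)/\RR(D',(B\cup\{x\})\backslash D')$, and multiplying by $(-1)^{(m-k)(n-k)}$ collapses the sign to $(-1)^{(m-k+k)(n-k)}=(-1)^{m(n-k)}$, giving the stated expression for $G_k$.

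The one point requiring care is the legitimacy of Corollary~\ref{exchange2} with $r=0$. Its proof extracts the coefficient of $x_{r+1}^p\cdots x_{m-p}^p$ from the identity of Lemma~\ref{exchange}; for $r=0$ this is the coefficient of $x_1^p\cdots x_{m-p}^p$, and since $\RR(\bfx,A')=\prod_i\prod_{\alpha\in A'}(x_i-\alpha)$ and $\RR(\bfx,B')=\prod_i\prod_{\beta\in B'}(x_i-\beta)$ this top-degree coefficient equals $1$ on both sides of Lemma~\ref{exchange}, which is exactly the $\RR(\bfx,\cdot)=1$ specialization used above. Everything else in the argument is routine bookkeeping of signs of the form $\RR(Y,Z)=(-1)^{|Y|\,|Z|}\RR(Z,Y)$, so I do not expect any further obstacle.
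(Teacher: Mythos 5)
Your proof is correct and takes essentially the same route as the paper: both start from Corollary~\ref{FkGk}, absorb $\RR(A,B\backslash B')\RR(x,B\backslash B')$ into $\RR(A\cup\{x\},B\backslash B')$, invoke Exchange Corollary~\ref{exchange2} with $r=0$, and obtain $G_k$ from the symmetry $G_k(f,g)=(-1)^{(m-k)(n-k)}F_k(g,f)$. The only (cosmetic) divergence is in which set is assigned to the corollary's $A$: the paper puts $B$ there after an extra numerator-and-denominator sign flip, whereas you put $A\cup\{x\}$ there and read off the left-hand side directly, which shortens the sign bookkeeping without changing the argument.
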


\begin{proof} These identities are also an immediate  consequence of Exchange Corollary \ref{exchange2}, applied to $r=0$ variables.  We set $C:=A\cup\{x\}$, $D:=B\cup\{x\}$.
\begin{align*}
F_k(f,g)&=  (-1)^{m-k}\sum_{B'\subset B, |B'|=k+1} \frac{R(C, B \backslash  B')}{R(B', B\backslash  B')}\\
&=  (-1)^{(m-k)(n-k)}\sum_{B'\subset B, |B'|=k+1} \frac{R(B\backslash  B',C)}{R( B\backslash  B', B')}\\
&=  (-1)^{(m-k)(n-k)}\sum_{C'\subset C, |C'|=k+1} \frac{R(B,C\backslash C')}{R( C', C\backslash C')}\\
&= (-1)^{k(m-k)} \sum_{C'\subset C, |C'|=k+1} \frac{R(C\backslash C',B)}{R(  C',C\backslash C')},\\
G_k(f,g)&= (-1)^{(m-k)(n-k)}F_k(g,f)\\
&= (-1)^{m(n-k)} \sum_{D'\subset D, |D'|=k+1} \frac{R(D\backslash D',A)}{R(  D',D\backslash D')}.
\end{align*}

\end{proof}

As a corollary, we can express $F_k(f,g)$ and $G_k(f,g)$ as symmetric polynomials in the variables $A\cup \{x\}$ and $B\cup \{x\}$, respectively.

\begin{corollary}\label{Schur}
Let $0\le k\leq \min\{m-1,n-1\}$.  Then we have the following expressions for $F_k(f,g)$ and $G_k(f,g)$ as symmetric polynomials  in $A\cup \{x\}$ and $B\cup \{x\}$, respectively:
{\small\begin{eqnarray*}
F_k(f,g)= (-1)^{m-k} \frac{\det \begin{array}{|cccc|c}
\multicolumn{4}{c}{ \scriptstyle{m+1}}&\\
\cline{1-4}\
g(\alpha_1)\alpha_1^{m-k-1}& \dots &g(\alpha_m)\alpha_m^{m-k-1}&g(x)x^{m-k-1}&\\
\vdots & & \vdots & &\scriptstyle{m-k}\\
g(\alpha_1)& \dots &g(\alpha_m)&g(x)& \\
\cline{1-4} \
\alpha_1^{k}& \dots &\alpha_m^{k}&x^{k}&\\
\vdots & & \vdots & &\scriptstyle{k+1}\\
1& \dots &1&1& \\
\cline{1-4} \multicolumn{2}{c}{}
\end{array}}{\det(V(A\cup \{x\}))}.
\end{eqnarray*}}
and
{\small $$
G_k(f,g)= (-1)^{(m-k-1)(n-k)} \frac{\det \begin{array}{|cccc|c}
\multicolumn{4}{c}{ \scriptstyle{n+1}}&\\
\cline{1-4}\
f(\beta_1)\beta_1^{n-k-1}& \dots &f(\beta_n)\beta_n^{n-k-1}&f(x)x^{n-k-1}&\\
\vdots & & \vdots & &\scriptstyle{n-k}\\
f(\beta_1)& \dots &f(\beta_n)&f(x)& \\
\cline{1-4} \
\beta_1^{k}& \dots &\beta_n^{k}&x^{k}&\\
\vdots & & \vdots & &\scriptstyle{k+1}\\
1& \dots &1&1& \\
\cline{1-4} \multicolumn{2}{c}{}
\end{array}}{\det(V(B\cup \{x\}))},
$$}
\end{corollary}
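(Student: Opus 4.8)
The plan is to prove the determinantal formula for $F_k(f,g)$ by recognizing that the expression on the right-hand side is, up to sign, a Schur-type symmetric polynomial, and then matching it against the root-expression for $F_k$ already obtained in Proposition \ref{FkGk2}. First I would observe that the claimed formula has exactly the shape of the matrix form in Proposition \ref{matrixforms}: the denominator $\det(V(A\cup\{x\}))$ is the Vandermonde on the $m+1$ nodes $A\cup\{x\}$, and the numerator is a determinant whose bottom $k+1$ rows form a Vandermonde block of exponents $0,\dots,k$ and whose top $m-k$ rows carry the extra factor $g(\cdot)$ times powers $0,\dots,m-k-1$. So the strategy is to show that this quotient equals
\[
(-1)^{k(m-k)}\sum_{C'\subset A\cup\{x\},\,|C'|=k+1}\frac{\RR\big((A\cup\{x\})\backslash C',B\big)}{\RR\big(C',(A\cup\{x\})\backslash C'\big)},
\]
which by Proposition \ref{FkGk2} equals $(-1)^{k(m-k)}\cdot(-1)^{k(m-k)}F_k(f,g)=F_k(f,g)$, wait — one must track the signs carefully: Proposition \ref{FkGk2} gives $F_k=(-1)^{k(m-k)}\sum_{C'}\cdots$, so the sum equals $(-1)^{k(m-k)}F_k$, and combined with the prefactor $(-1)^{m-k}$ in the corollary one needs the quotient of determinants to equal $(-1)^{m-k+k(m-k)}\sum_{C'}\cdots$; I would verify this bookkeeping at the end.

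The core computation is a Laplace (cofactor) expansion of the numerator determinant along its last $k+1$ rows, i.e. the Vandermonde block $\big(\text{column }j\big)_{\text{rows }m-k+1,\dots,m+1}$. Choosing a set $C'$ of $k+1$ columns (equivalently, a $(k+1)$-subset $C'\subset A\cup\{x\}$ of the $m+1$ nodes) selects the complementary minor: the bottom block contributes $\det V(C')=\prod_{\gamma<\gamma'\in C'}(\gamma-\gamma')$ and the top block contributes, on the $m-k$ nodes of $(A\cup\{x\})\backslash C'$, the determinant $\det\big(g(\gamma)\gamma^{i}\big)=\big(\prod_{\gamma\in (A\cup\{x\})\backslash C'}g(\gamma)\big)\det V\big((A\cup\{x\})\backslash C'\big)$, together with the Laplace sign for that column split. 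Since $g(\gamma)=\RR(\gamma,B)=\prod_{\beta\in B}(\gamma-\beta)$, the product $\prod_{\gamma\notin C'}g(\gamma)=\RR\big((A\cup\{x\})\backslash C',B\big)$ is exactly the numerator of the $C'$-term in Proposition \ref{FkGk2}. Dividing by $\det V(A\cup\{x\})=\det V(C')\cdot\det V((A\cup\{x\})\backslash C')\cdot\RR\big(C',(A\cup\{x\})\backslash C'\big)\cdot(\text{sign})$, the two Vandermonde factors cancel and one is left precisely with $\RR(\ldots)/\RR(C',(A\cup\{x\})\backslash C')$ up to a global sign. The formula for $G_k(f,g)$ then follows immediately from the symmetry $G_k(f,g)=(-1)^{(m-k)(n-k)}F_k(g,f)$ (used already in Corollary \ref{FkGk}) by swapping the roles of $A,B$ and $m,n$, plus one more sign check: $(-1)^{m-k}$ with $m\leftrightarrow n$ gives $(-1)^{n-k}$, and multiplying by $(-1)^{(m-k)(n-k)}$ yields $(-1)^{(m-k-1)(n-k)}$ as claimed.

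The main obstacle I expect is the sign bookkeeping: reconciling the Laplace-expansion sign (which depends on the positions of the selected columns, hence on the "shuffle" determined by $C'$), the sign hidden in factoring $\det V(A\cup\{x\})$ into $\det V(C')$, $\det V((A\cup\{x\})\backslash C')$ and $\RR(C',(A\cup\{x\})\backslash C')$ (which carries the same shuffle sign), and finally the prefactor $(-1)^{m-k}$ together with the $(-1)^{k(m-k)}$ from Proposition \ref{FkGk2}. The key point that makes this manageable is that the Laplace shuffle sign and the Vandermonde-factorization shuffle sign are governed by the \emph{same} combinatorial data and therefore cancel, leaving only a global sign independent of $C'$ — so the computation reduces to checking one special case, say $C'$ consisting of the last $k+1$ nodes. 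A secondary, purely formal point is to note that both sides are polynomials in $x$ (the denominators divide the numerators, exactly as argued in the proof of Proposition \ref{matrixforms}, since setting two nodes equal kills the determinant), so the identity of rational functions is in fact an identity of polynomials, and the displayed determinantal quotients do define symmetric polynomials in $A\cup\{x\}$ and $B\cup\{x\}$ respectively because permuting nodes permutes columns of both numerator and denominator by the same permutation.
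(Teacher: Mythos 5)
Your proposal follows essentially the same route as the paper's proof: a Laplace (cofactor) expansion of the numerator determinant along the block of complementary rows, the factorization $\det V(C) = \sg(C'',C)\det V(C'')\det V(C\backslash C'')\,\RR(C'',C\backslash C'')$ so that the shuffle signs cancel, identification of $\prod_{\gamma\notin C'}g(\gamma)$ with $\RR((A\cup\{x\})\backslash C',B)$, and matching the resulting sum against Proposition~\ref{FkGk2}, with the $G_k$ formula obtained from $G_k(f,g)=(-1)^{(m-k)(n-k)}F_k(g,f)$. Your sign bookkeeping, though not fully carried out, is consistent with the paper's, and the approach is correct.
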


\begin{proof}
We  prove the claim for $F_k(f,g)$, the claim for $G_k(f,g)$ follows from $G_k(f,g)= (-1)^{(m-k)(n-k)}F_k(g,f)$. \\
 First we verify that the expression in the right-hand side is a polynomial: the denominator divides the numerator since making $\alpha_i=\alpha_j$ or $\alpha_i=x$ yields the vanishing of the numerator. It is symmetric in $A\cup \{x\}$ because
both the numerator and the denominator are alternate. We set $C:=A\cup \{x\}$. We expand the determinant in the numerator of the right-hand side by the first  $m-k$ rows, and  get that the ratio of determinants equals
\begin{eqnarray*}&&\frac{1}{V(C)}\sum_{C''\subset C, |C''|=m-k} \sg(C'',C)V(C\backslash C'')V(C'')\, \prod_{c\in C''} g(C'')\\
&=& \sum_{C''\subset C, |C''|=m-k}\frac{\prod_{c\in C''} g(C'')}{R(C'', C\backslash C'')}\\
&=&  \sum_{C'\subset C, |C'|=k+1}\frac{\RR(C\backslash C', B)} {\RR(C\backslash C', C')} \ = \ (-1)^{m-k} F_k(f,g),
\end{eqnarray*}
using for the  first equality that $$\det(V(C))=\sg(C'',C)\det(V(C''))\det(V(C\backslash C''))\RR(C'', C\backslash C''),$$ where $\sg(C'',C)$ is the sign needed to bring the columns of   $C''$ into $\{1, \ldots, m-k\}$.
\end{proof}

We close the paper by pointing  to a simple connection of the previous corollary to Schur polynomials. Recall that the Schur polynomial $s_\lambda({\bfx})$ corresponding to  an $\ell$-tuple of indeterminates $\bfx=(x_1,\dots,x_\ell)$ and a partition $\lambda=(\lambda_1,\ldots, \lambda_\ell)\in \Z^\ell_{\geq 0}$, where $\lambda_1\ge \lambda_2\ge \cdots \ge \lambda_\ell\ge 0$, is defined as
$$
s_\lambda({\bfx}):=\frac{\det \begin{array}{|ccc|c}
\multicolumn{3}{c}{\scriptstyle{\ell}}&\\
\cline{1-3}\
x_1^{\lambda_1+\ell-1}& \dots &x_\ell^{\lambda_1+\ell-1}&\\
x_1^{\lambda_2+\ell-2}& \dots &x_\ell^{\lambda_2+\ell-2}&\\
\vdots & & \vdots & \scriptstyle{\ell}\\
x_1^{\lambda_{\ell-1}+1}& \dots &x_\ell^{\lambda_{\ell-1}+1}& \\
x_1^{\lambda_{\ell}}& \dots &x_\ell^{\lambda_{\ell}}&\\
\cline{1-3} \multicolumn{2}{c}{}
\end{array}}{\det(V({\bfx}))},
$$
or equivalently, it is the determinant of the submatrix of the Vandermonde matrix $V_{\lambda_1+\ell}({\bfx})$  corresponding to  the rows indexed by the exponents $\lambda_1+\ell-1, \lambda_2+\ell-2,\ldots, \lambda_{\ell}$, divided by the usual Vandermonde determinant $\det(V({\bfx}))$.\\

To see the connection between  Corollary \ref{Schur} and Schur polynomials, assume first that $g=x^n$. Then we have
$$
F_k(f,x^n)= (-1)^{m-k} \frac{\det \begin{array}{|cccc|c}
\multicolumn{4}{c}{ \scriptstyle{m+1}}&\\
\cline{1-4}\
\alpha_1^{n+m-k-1}& \dots &\alpha_m^{n+m-k-1}&x^{n+m-k-1}&\\
\vdots & & \vdots & &\scriptstyle{m-k}\\
\alpha_1^n& \dots &\alpha_m^n&x^n& \\
\cline{1-4} \
\alpha_1^{k}& \dots &\alpha_m^{k}&x^{k}&\\
\vdots & & \vdots & &\scriptstyle{k+1}\\
1& \dots &1&1& \\
\cline{1-4} \multicolumn{2}{c}{}
\end{array}}{\det(V(A\cup \{x\}))}.
$$
Thus, by definition, we immediately get that
$$F_k(f,x^n)= (-1)^{m-k} s_\lambda(A\cup\{x\}), $$
where $\lambda:=(n-k-1, \ldots, n-k-1, 0,\ldots, 0)=((n-k-1)^{m-k};0^{k+1})\in \Z_{\geq 0}^{m+1}$.\\
Similar expressions can be obtained for $G_k(x^m, g)$ in terms of the Schur polynomial on $B\cup \{x\}$ with respect to the partition $((m-k-1)^{n-k};0^{k+1})$.\\
For general $g=\sum_{i=0}^n g_ix^i$ we can use the following matrix identity, together with the Cauchy-Binet formula, to get an expression for $F_k(f,g)$  in terms of Schur polynomials on $A\cup \{x\}$:
{\small \begin{eqnarray*}
&&\begin{array}{|cccc|c}
\multicolumn{4}{c}{ \scriptstyle{m+1}}&\\
\cline{1-4}\
g(\alpha_1)\alpha_1^{m-k-1}& \dots &g(\alpha_m)\alpha_m^{m-k-1}&g(x)x^{m-k-1}&\\
\vdots & & \vdots & &\scriptstyle{m-k}\\
g(\alpha_1)& \dots &g(\alpha_m)&g(x)& \\
\cline{1-4} \
\alpha_1^{k}& \dots &\alpha_m^{k}&x^{k}&\\
\vdots & & \vdots & &\scriptstyle{k+1}\\
1& \dots &1&1& \\
\cline{1-4} \multicolumn{2}{c}{}
\end{array}=\\
&=&\begin{array}{c|ccccc|}
\multicolumn{6}{c}{\scriptstyle{m+n-k}}\\
\cline{2-6}
&g_{n} & \dots & g_{0} &  & \\
\scriptstyle{m-k}& & \ddots &  & \ddots & \\
& &  & g_{n} & \dots & g_{0} \\
\cline{2-6}
&&&&&\\
\scriptstyle{k+1}& &\multicolumn{2}{c}{{\bf 0}_{(k+1)\times (m+n-1)}}& &{\bf Id}_{k+1}\\
&&&&&\\
\cline{2-6} \multicolumn{2}{c}{}
\end{array}\cdot V_{m+n-k}(A\cup \{x\}) . \end{eqnarray*}}

Note that the above expressions using Schur polynomials on $A\cup \{x\}$ are related to the expressions given in \cite[page 3]{Las}, where the $k=m-1$ case is considered. In the $k=m-1$ case,  the Lagrange operator defined in \cite[page 3]{Las} is the map $L_A:g\mapsto F_{m-1}(f,g)$, using our notation. For general $k$, a special case of the Sylvester operator defined in  \cite[page 16]{Las} is the map $g(x_1)\cdots g(x_{m-k})\mapsto F_k(f,g)$.

\bigskip

\begin{ac} Agnes Szanto and Teresa Krick thank the Simons Institute for the Theory of Computing, for the  Fall'14 program  ``Algorithms and Complexity in Algebraic Geometry'' where this work was started.
We are also grateful to  Carlos D'Andrea for the  many useful discussions we had with him,  to Giorgio Ottaviani for a great conversation on symmetric polynomials  which improved the proof of Lemma \ref{dim}, and to   Ricky Ini Liu  who helped us understanding the connections between Schur polynomials and Proposition \ref{FkGk2}.
\end{ac}

\bigskip
\bibliographystyle{alpha}
\def\cprime{$'$} \def\cprime{$'$} \def\cprime{$'$}

\end{document}